\definecolor{purple}{rgb}{0.65, 0, 1}
\definecolor{orange}{rgb}{1,.5,0}
\def\R{\mathbb{R}}
\def\II{{\rm I\kern-0.5exI}}
\def\III{{\rm I\kern-0.5exI\kern-0.5exI}}
\numberwithin{equation}{section}
\newtheorem{theorem}{Theorem}[section]
\newtheorem{remark}[theorem]{Remark}
\newtheorem{lemma}[theorem]{Lemma}
\newtheorem{definition}[theorem]{Definition}
\newtheorem{proposition}[theorem]{Proposition}
\newtheorem{corollary}[theorem]{Corollary}
\begin{document}
\title{Asymptotic Behavior  for Critical Patlak-Keller-Segel model and an Repulsive-Attractive Aggregation Equation}
\author{Yao Yao\thanks{Department of Mathematics, UCLA. yaoyao@math.ucla.edu}}
\date{}
\maketitle

\begin{abstract}
In this paper we study the long time asymptotic behavior for a class of diffusion-aggregation equations.  Most results except the ones in Section 3.3 concern radial solutions. The main tools used in the paper are maximum-principle type arguments on mass concentration of solutions, as well as energy method.  For the Patlak-Keller-Segel problem with critical power $m=2-2/d$, we prove that all radial solutions with critical mass would converge to a family of stationary solutions, while all radial solutions with subcritical mass converge to a self-similar dissipating solution algebraically fast.  For non-radial solutions, we obtain convergence towards the self-similar dissipating solution when the mass is sufficiently small.  We also apply the mass comparison method to another aggregation model with repulsive-attractive interaction, and prove that radial solutions converge to the stationary solution exponentially fast.
\end{abstract}
\section{Introduction}
Recently there has been a growing interest in the study of nonlocal aggregation phenomena.  The most widely studied models are the Patlak-Keller-Segel  (PKS) models, which describes the cell movement driven by chemotaxis \cite{ks, p, h}.  In this paper, we study the PKS equation in dimension $d\geq 3$ with degenerate diffusion, given by
\begin{equation}\left\{
\begin{split}
 u_t~&= \Delta u^m + \nabla \cdot (u\nabla c)  &x\in\mathbb{R}^d, t\geq 0,\\[0.05cm]
\Delta c~& = u&x\in\mathbb{R}^d, t\geq 0, \\[0.05cm]
~u(x,0)~& = u_0(x) \in L^1_+(\R^d;(1+|x|^2) dx) \cap L^\infty(\R^d)&x\in\mathbb{R}^d,
\end{split}\right. \label{pks}
\end{equation}
here $m>1$ models the local repulsion of cells with anti-crowding effects \cite{tb, tbl}.   This model is a generalization of the classical parabolic-elliptic PKS model in 2D, which has been extensively studied over the years (see the review \cite{h, b} and \cite{dp, bdp, bcm}).
%
%

Throughout this paper, we focus on the critical power $m=m_d:= 2-2/d$, which produces an exact balance  between the diffusion term and the aggregation term  when one performs a mass-invariant scaling.  To study the well-posedness of \eqref{pks}, the following \emph{free energy} functional \eqref{energy} is an important quantity, where the first term is usually referred to as the \emph{entropy} and the latter term is referred to as the \emph{interaction energy}: 
\begin{equation}
\mathcal{F}(u) = \int_{ \mathbb{R}^d} \left(\frac{1}{m-1}u^m +\frac{1}{2}u(u*\mathcal{N}) \right)dx. \label{energy}
\end{equation}
Here $\mathcal{N}(x) = -\frac{1}{(d-2)\sigma_d} |x|^{2-d}$ is the Newtonian potential in $d\geq 3$, with $\sigma_d$ the surface area of the sphere $\mathbb{S}^{d-1}$ in $\mathbb{R}^d$.  The free energy of a weak solution to \eqref{pks} is non-increasing in time; indeed, it is shown that \eqref{pks} is the gradient flow for  $\mathcal{F}$ with respect to the Wasserstein metric (see for example \cite{ags} and \cite{cmv}).

The key observation in \cite{bcl} is the sharp Hardy-Littlewood-Sobolev inequality, which bounds the interaction term in the free energy functional by the entropy term:
\begin{equation}
\left|\int_{\mathbb{R}^d} u(u*\mathcal{N}) dx\right| \leq C^* \|u\|^{2/d}_{L^1(\mathbb{R}^d)} \|u\|^m_{L^m(\mathbb{R}^d)}, \label{vhls}
\end{equation}
where $C^*$ is a constant only depending on the dimension $d$. 
Making use of this inequality, it is proved in \cite{bcl} (and generalized by \cite{brb} for more general kernels) that there exists a critical mass $M_c$ only depending on $d$, which sharply divides the possibility of finite time blow up and global existence.   Although the global existence/blow-up results are well understood, the asymptotic behavior of the solution has not been fully investigated: this motivates our study. In this paper we study the asymptotic behavior of solutions with mass $M\leq M_c$, and try to answer the open questions raised in \cite{bcl} and \cite{b}. 

$\circ$~\textit{Critical case:}

When $M=M_c$, it is proved in \cite{bcl} that the global minimizers of the free energy functional $\mathcal{F}$ have zero free energy, and are given by the one-parameter family
\begin{equation}
u_R(x) = \frac{1}{R^d} u_1(\dfrac{x}{R})
\label{family_stat_solution}
\end{equation}
subject to translations. Here $u_1$ is the unique radial classical solution to
\begin{equation}
\frac{m}{m-1} \Delta u_1^{m-1} + u_1 = 0  \text{ in }B(0,1),  ~\text{ with }u_1 = 0 \text{ on }\partial B(0,1).
\label{eq:u1}
\end{equation}
It was unknown that whether this family of stationary solutions attract some solutions.  

In Section 3.1, we prove the convergence of radial solutions towards this family of stationary solution (see Theorem \ref{thm:m=mc}) using a combination of mass comparison and energy method.  The mass comparison property is a version of comparison principle on the mass distribution of solutions (see Proposition \ref{comp_concentration}), which has been introduced in \cite{ky}. Although it only works for radial solutions, it provides more delicate control than the energy method: it has been recently used in \cite{bk} to prove finite time blow-up of solutions with supercritical mass for a diffusion-aggregation model where Virial identity does not apply.  We mention that the mass comparison property have been previously observed for the porous medium equation \cite{v} and PKS models with linear diffusion \cite{jl, bkln}.

$\circ$~\textit{Subcritical case:}

When $0<M<M_c$, the weak solution exists globally in time, as long as its initial $L^m$-norm is finite \cite{bcl, brb}.  Moreover it has been proved in \cite{bcl} that there exists a dissipating self-similar solution, with the same scaling as the porous medium equation.  However it was unknown whether this self-similar solution would attract all solutions in the intermediate asymptotics.

In Section 3.2,  we prove algebraic convergence of radial solutions towards this self-similar dissipating solution, given that the initial data is bounded and compactly supported.  This is done by constructing explicit barriers in the mass comparison sense. 

For general (non-radial) solutions, the asymptotic behavior of solution to \eqref{pks} is unknown for all mass sizes when $m=2-2/d$. In Section 3.3, we prove that when the mass is sufficiently small, every solution to \eqref{pks} with compactly supported initial data converges to the self-similar dissipating solution. We mention that similar results for small-mass solutions are obtained by \cite{bdef} for the PKS equation in 2D with linear diffusion, however their argument is based on a spectral gap method and cannot be generalized to \eqref{pks} due to the nonlinear diffusion term.

When dealing with non-radial solutions  with small mass,  our key result is the uniqueness of stationary solution (in rescaled variables), which is proved using a maximum principle type argument (see Theorem \ref{thm:unique}). We point out that although it is well known that the global minimizer to the rescaled energy \eqref{rescaled_energy} is unique \cite{bcl}, there are few results concerning the uniqueness of stationary solutions for nonlocal PDEs, except in the following special cases: the stationary solution to a similar equation is proved to be unique by \cite{bdf}  in the 1D case, and stationary solution to the 2D Navier-Stokes equation (in rescaled variable) is proved to be unique by \cite{ggl}, where their proof is based on the fact that all stationary solutions have the same second moment, thus cannot be applied to \eqref{pks}. 



\vspace{0.4cm}

In Section 4 we generalize the mass comparison methods to an aggregation equation 
\begin{equation}
u_t - \nabla \cdot (u \nabla K* u) = 0, \label{rep_att1}
\end{equation}
and prove that the radial solution converges towards the stationary solution exponentially fast for a class of kernel $K$.  Equation \eqref{rep_att1} appears in various contexts as a mathematical model for biological aggregations \cite{me, tbl}.  In order to capture the biologically relevant features of solutions, it is desirable for the interaction kernel $K$ to be repulsive for short-range interactions and attractive for long-range. Aggregation equations with such kinds of  kernel are studied in \cite{bclr, ltb, fhk, fh, ksub}. In this paper we adopt the kernel $K$ proposed in \cite{fhk}, which has a repulsion component in the form of the Newtonian potential $\mathcal{N}$ and an attraction component satisfying the power law:
\begin{equation}
K(x) = \mathcal{N}(x) + \frac{1}{q}|x|^q, \label{def:K1}
\end{equation}
here when $q=0$ the second term is replaced by $\ln|x|$.  We assume that the attraction part is less singular than the repulsion part at the origin, i.e. $q > 2-d$.  In addition, we assume $q\leq 2$, i.e. the long-range attraction does not grow more than linearly as the distance goes to infinity. Note that  in \cite{fh} they mostly focus on the case $q\geq 2$.

The existence and uniqueness of weak solution is established in \cite{fh} for $q>2-d$. In addition, they proved that for any mass there is a unique radial stationary solution which is continuous in its support.  While numerical results in \cite{fh} suggests that this stationary solution should be a global attractor, there is no rigorous proof.   We prove that when $2-d<q\leq 2$, the mass comparison argument can be easily generalized to \eqref{rep_att1}, then we construct barriers in the mass comparison sense to prove that all radial solution converges to the stationary solution exponentially fast.

\emph{Outline of the Paper.}  In Section 2 we state a mass comparison result for a general diffusion-aggregation equation. In section 3 we apply it to the PKS model with critical power, and obtain some asymptotic results for radial solutions with critical and subcritical mass sizes respectively. In Section 4 we demonstrate that the mass comparison can also be applied to an aggregation model with repulsive-attractive interaction, and prove the asymptotic convergence of solution towards the stationary solution.

\subsection{Summary of Results}

By  constructing explicit barriers in the mass comparison sense and using energy method, we obtain the following results for radial solutions of \eqref{pks}:
 
 \begin{theorem}
Suppose $d\geq 3$ and $m=2-2/d$. Let $u(x,t)$ be the weak solution to \eqref{pks} with mass $A$ and initial data $u_0 \in L^1_+(\mathbb{R}^d; (1+|x|^2) dx) \cap L^\infty(\mathbb{R}^d)$, where $u_0$ is continuous, radially symmetric and compactly supported.  Then the following results hold:
\begin{itemize}
\item If $0<A<M_c$, $u(\cdot, t)$ converges to the dissipating self-similar solution $u_A$ as $t\to \infty$, where the Wasserstein distance between $u(\cdot, t)$ and $u_A$ decays algebraically fast as $t\to \infty$. (Corollary \ref{thm:conv_m<mc})
\item If $A=M_c$ and $u_0$ satisfies $\nabla u_0^m \in L^2(\mathbb{R}^d)$ in addition to the assumptions above, then $u(\cdot, t) \to u_{R_0}$ in $L^\infty(\mathbb{R}^d)$ as $t \to \infty$ for some $R_0>0$, where $u_{R_0}$ is a stationary solution defined in \eqref{family_stat_solution}. (Theorem \ref{thm:m=mc})
\end{itemize}
 \end{theorem}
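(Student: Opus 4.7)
The plan is to handle the two parts separately, both leveraging the mass comparison principle (Proposition \ref{comp_concentration}) applied to the radial mass distribution $M(r,t) := \int_{B(0,r)} u(x,t)\,dx$, but combining it with different additional tools. For the subcritical case I would use explicit barriers only, whereas for the critical case I would combine barriers with the energy dissipation identity and a rigidity argument for stationary solutions.

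For the subcritical case $0 < A < M_c$, I would construct explicit super- and sub-solutions in the mass comparison sense by rescaling the dissipating self-similar profile $u_A$. Because $m = 2-2/d$ gives $d(m-1) + 2 = d$, this profile has the porous medium scaling $u_A(x,t) = \lambda(t)^{-d} U(x/\lambda(t))$ with $\lambda(t) = (t+t_0(A))^{1/d}$. Using two time-shifted rescalings $u_A^{\pm}$ corresponding to slightly perturbed $t_0$, the hypotheses that $u_0$ is bounded, compactly supported and radial give $M^{-}(r,0) \leq M(r,0) \leq M^{+}(r,0)$ for suitable shifts; Proposition \ref{comp_concentration} then propagates these inequalities for all $t > 0$. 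The gap $M^{+}(r,t) - M^{-}(r,t)$ shrinks at an explicit algebraic rate coming from the $t^{1/d}$ spreading. Finally I would convert $L^1_r$ closeness of cumulative mass distributions into Wasserstein distance via the radial transport identity $W_p^p(\mu,\nu) = \int_0^A |F_\mu^{-1}(s) - F_\nu^{-1}(s)|^p\,ds$, which for radial measures in $\mathbb{R}^d$ reduces the Wasserstein comparison to a one-dimensional quantile comparison.

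For the critical case $A = M_c$, I would proceed in four steps. First, using the compact support of $u_0$ and the fact that each $u_R$ is supported in $B(0,R)$, choose $R^\pm$ so that the initial mass distribution is sandwiched between $M_{R^-}(r)$ and $M_{R^+}(r)$; mass comparison then produces uniform compact support and a uniform $L^\infty$ bound for the trajectory, and porous-medium type Hölder regularity yields equicontinuity. Second, invoke the Wasserstein gradient-flow structure: $\mathcal{F}(u(\cdot,t))$ is non-increasing and bounded below by $\mathcal{F}(u_R) = 0$, and the extra hypothesis $\nabla u_0^m \in L^2$ lets the dissipation identity give $\int_0^\infty \int u\bigl|\nabla\bigl(\tfrac{m}{m-1}u^{m-1}+c\bigr)\bigr|^2\,dx\,dt < \infty$. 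Third, extract a subsequential limit $u_\infty$ along $t_n \to \infty$; compactness plus the vanishing of the dissipation in the limit forces $u_\infty$ to be a stationary solution of mass $M_c$, hence a member of the family $\{u_R\}_{R>0}$ by \cite{bcl}. Fourth, identify a unique $R_0$ independent of subsequence by tracking the quantities $R_-(t) := \sup\{R : M_R(r) \leq M(r,t)\ \forall r\}$ and $R_+(t) := \inf\{R : M_R(r) \geq M(r,t)\ \forall r\}$, which are monotone by mass comparison; the energy dissipation forces them to coincide with a common $R_0$ in the limit. The uniform bounds and equicontinuity then upgrade subsequential weak convergence to $u(\cdot,t) \to u_{R_0}$ in $L^\infty(\mathbb{R}^d)$.

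The main obstacle I expect is precisely the identification of a unique $R_0$ in the critical case. Since all $u_R$ have the same total mass $M_c$ but differ only in spread, mass comparison against any single $u_R$ produces just a one-sided bound; the family must be used collectively, and one must rule out the trajectory drifting between different members as $t \to \infty$. Carrying out the monotonicity/rigidity argument for $R_{\pm}(t)$ and combining it with the energy method to close the squeeze is the technical heart of the argument. A secondary difficulty is the upgrade from convergence in weaker norms to $L^\infty$, which is why the additional regularity assumption $\nabla u_0^m \in L^2$ is imposed; without it the degenerate diffusion does not supply enough compactness.
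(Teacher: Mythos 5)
Your subcritical argument is essentially the paper's: the time-shifted self-similar solutions $u_A(\cdot, t+t_0^{\pm})$ are, after the change of variables $\mu(\lambda,\tau) = (t+1)u(x,t)$, $\lambda = x(t+1)^{-1/d}$, $\tau = \ln(t+1)$, exactly the explicit rescaled solutions $R(\tau)^{-d}\mu_A(\cdot/R(\tau))$ of Lemma \ref{lemma:family_solution}, and the resulting algebraic rate $t^{-(d-1)/d}$ is correct. One small omission: if $u_0(0)=0$ no lower barrier exists at $t=0$; the paper disposes of this by first running a porous-medium comparison to get positivity at the origin after finite time (as in Lemma \ref{lemma:u_bounded}).

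The critical case is where there is a real gap, and it is at the step you yourself flagged as the technical heart. Steps one through three (uniform $L^\infty$ and uniform compact support by comparison with the family $u_R$; energy dissipation from \eqref{energy_ineq}; Arzel\`a--Ascoli plus the $\nabla u_0^m \in L^2$ hypothesis to identify a subsequential limit as a stationary solution $u_R$) match the paper. But your step four does not close. Even granting that $R_-(t)$ and $R_+(t)$ (with the $\sup$ and $\inf$ interchanged so they are finite) are monotone by mass comparison, they converge to limits $R_+^\infty \le R_-^\infty$ and you give no argument that these coincide. Energy dissipation cannot force this: every $u_R$ has $\mathcal{F}(u_R)=0$, so the limiting energy is the same whatever stationary solution the trajectory approaches, and the monotone energy tells you nothing about \emph{which} member of the one-parameter family is selected. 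Moreover, uniform convergence of $u(\cdot,t_n)\to u_{R_0}$ need not force $R_\pm(t_n)\to R_0$, since a persistent small density tail near $\partial B(0,R_2)$ keeps $R_-(t_n)$ pinned away from $R_0$; so different subsequences could a priori select different $R\in[R_+^\infty,R_-^\infty]$. The paper closes the squeeze with a different quantity: the Virial identity \eqref{virial} gives $\frac{d}{dt}\int |x|^2 u\,dx = 2(d-2)\mathcal{F}[u(t)]\ge 0$ at critical mass, so the second moment $M_2[u(\cdot,t)]$ is monotone; since $M_2[u_R]$ is strictly increasing in $R$, all subsequential limits must have the same second moment and therefore coincide. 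That monotone scalar, not the energy, is what pins down $R_0$; without it (or a correct substitute) your argument for uniqueness of the limit is incomplete.
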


For general (possibly non-radial) initial data, we use a maximum principle type method to prove that when the mass is sufficiently small, every compactly supported stationary solution must be radially symmetric.  This leads to the following asymptotic convergence result:

 \begin{theorem}
Suppose $d\geq 3$ and $m=2-2/d$. Let $u(x,t)$ be the weak solution to \eqref{pks} with mass $0<A<M_c/2$ being sufficiently small, and the initial data $u_0 \in L^1_+(\mathbb{R}^d; (1+|x|^2) dx) \cap L^\infty(\mathbb{R}^d)$ is continuous and compactly supported.  Then we have
$$\lim_{t\to\infty}\|u(\cdot, t) - u_A\|_p=0 \text{ for all }1\leq p \leq \infty,$$
where $u_A$ is the self-similar dissipating solution defined in \eqref{dissipating_self_similar}. (Corollary \ref{cor:nonradial})
\end{theorem}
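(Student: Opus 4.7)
The strategy is to pass to self-similar variables and run a LaSalle-type argument, closed using the uniqueness Theorem~\ref{thm:unique}. Setting $\tau=\log(1+t)$, $y=x/(1+t)^\alpha$, and $v(y,\tau)=(1+t)^{d\alpha}u(x,t)$ with $\alpha$ chosen (as for the porous medium equation) so that the diffusion and the dilation scale alike, the system \eqref{pks} transforms into a confined equation of the form
\[
v_\tau=\Delta v^m+\nabla\cdot(v\nabla c_v)+\alpha\,\nabla\cdot(yv),
\]
for which the self-similar profile $u_A$ from \eqref{dissipating_self_similar} becomes a genuine stationary solution $V_A$, and which carries the rescaled free energy $\mathcal{F}_r$ of \eqref{rescaled_energy} as a Lyapunov functional.

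First I would collect uniform-in-$\tau$ estimates on $v$. The sub-critical Hardy--Littlewood--Sobolev inequality \eqref{vhls}, applied with the assumption $A<M_c/2$, gives a uniform lower bound for the rescaled energy and hence a uniform $L^m$ bound on $v(\cdot,\tau)$. A virial computation in rescaled variables provides an absorbing estimate on the second moment $\int|y|^2 v(y,\tau)\,dy$, which in particular yields tightness. For the uniform $L^\infty$ bound I would run a Moser-type iteration on the rescaled equation, using the smallness of $A$, once more through \eqref{vhls}, to absorb the nonlocal aggregation drift into the nonlinear diffusion.

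With these bounds in hand, integrability of the dissipation $-d\mathcal{F}_r(v)/d\tau$ together with the compactness produced by the above estimates allows us to extract, along every sequence $\tau_n\to\infty$, a subsequential limit $v_\infty$ which is a compactly supported stationary solution of mass $A$ in the rescaled variables. Invoking Theorem~\ref{thm:unique}, the only such stationary solution is $V_A$ itself, so every subsequential limit coincides and $v(\cdot,\tau)\to V_A$ along the full sequence, in $L^p$ for every $1\leq p<\infty$. Undoing the rescaling yields $\|u(\cdot,t)-u_A(\cdot,t)\|_p\to 0$ for $1\leq p<\infty$, and standard parabolic H\"older regularity for the degenerate rescaled equation with bounded nonlocal drift upgrades this to uniform convergence, using the second-moment estimate to control tails.

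The principal obstacle will be the uniform $L^\infty$ bound for possibly non-radial data: the mass comparison Proposition~\ref{comp_concentration} is unavailable, so one must rely on purely energy/entropy-based arguments and quantitatively exploit the smallness of $A$ to dominate the aggregation term by the diffusion. A secondary difficulty is ensuring that subsequential limits are genuine stationary solutions, rather than merely critical points of $\mathcal{F}_r$, since the degeneracy of the diffusion calls for care at the free boundary; it is precisely at this juncture that Theorem~\ref{thm:unique} is invoked to pin down the limit.
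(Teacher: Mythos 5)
Your overall strategy --- rescale, compactness, uniqueness, LaSalle --- is the one the paper follows, but two of the technical ingredients you propose do not actually carry the argument. For the uniform $L^\infty$ bound you gesture at a Moser iteration, which is left entirely speculative. The paper instead proves Lemma~\ref{lemma:max_density} by a symmetrization device: the decreasing rearrangement $\mu^*(\cdot,\tau)$ is less concentrated than the \emph{radial} solution $\bar\mu$ started from $\mu_0^*$ (Theorem~6.3 of~\cite{ky}), mass comparison on the radial problem dominates $\bar\mu$ by a rescaled $\mu_A$ via Lemma~\ref{lemma:family_solution}, and a further comparison with an explicit Fokker--Planck profile yields $\|\mu(\cdot,\tau)\|_\infty\le CA^{2/d}$ for $\tau\ge 1$. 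This converts the non-radial problem into a radial one where mass comparison is available, precisely the obstruction you name but do not resolve.

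The more serious gap is that you replace the paper's uniform compact-support estimate by second-moment tightness. Tightness does not give compact support, and Theorem~\ref{thm:unique} asserts uniqueness only among \emph{compactly supported} stationary solutions; its proof furthermore uses Lemma~\ref{lemma:support} to place the support of any candidate stationary solution in a ball so small that $\int_{B(0,R)}|\mathcal{N}|\,dy<1/2$. Without the uniform confinement of $\text{supp}\,\mu(\cdot,\tau)$ in a fixed ball (obtained in Lemma~\ref{lemma:support} by treating $\mu*\mathcal{N}$ as an a priori potential and comparing against an explicit supersolution of a porous-medium-with-drift equation whose support shrinks), you cannot conclude that a subsequential limit is compactly supported, so the uniqueness theorem cannot be invoked. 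Once uniform boundedness, uniform compact support, and equicontinuity from~\cite{dib} are in hand, the paper gets $L^\infty$ convergence directly from Arzel\`a--Ascoli, with $L^p$ convergence a trivial consequence; the parabolic-H\"older upgrade you suggest is then unnecessary.
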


In Section 4 we generalize the mass comparison methods to the repulsive-attractive aggregation equation \eqref{rep_att}, and obtain the following asymptotic convergence result for $2-d<q\leq 2$:

\begin{theorem}\label{thm:aggregation_intro}
Assume $2-d < q \leq 2$.  Let $u$ be a weak solution to \eqref{rep_att1} with initial data $u_0$ and mass $A$, where $u_0 \in L^1(\mathbb{R}^d) \cap L^\infty(\mathbb{R}^d) $ is non-negative, radially symmetric and compactly supported.  In addition, we assume that $u_0$ is strictly positive in a neighborhood of $0$. Let $u_s$ be the unique stationary solution with mass $A$, as given by Proposition \ref{prop:stat_sol}. Then as $t\to \infty$, $u(\cdot, t)$ converges to $u_s$ exponentially fast in Wasserstein distance. (Theorem \ref{thm:aggregation})
\end{theorem}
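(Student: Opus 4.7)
The plan is to combine three ingredients: (a) a mass comparison principle for equation (\ref{rep_att1}), (b) explicit super- and sub-solutions that contract onto $u_s$ exponentially in the mass-comparison sense, and (c) a translation of uniform closeness of mass functions into exponential decay of Wasserstein distance. For a radial solution $u$, the cumulative mass $M(r,t) := \sigma_d \int_0^r u(\rho, t)\rho^{d-1}\,d\rho$ satisfies a first-order nonlocal evolution of the form
\begin{equation*}
M_t = M_r \cdot \Phi[M](r,t),
\end{equation*}
where $\Phi[M]$ is the radial velocity of the flow and decomposes as the (signed) sum of the Newtonian self-interaction $\pm M/(\sigma_d r^{d-1})$ and the attractive nonlocal term $J[M]$ coming from $\nabla(|x|^q/q)\ast u$. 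Here $\Phi$ depends monotonically on $M$ in the pointwise-mass sense; the natural extension of Proposition \ref{comp_concentration} to (\ref{rep_att1}), valid for $2-d<q\leq 2$, then asserts that initial orderings of mass functions are preserved by the flow.

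For (b), denote the mass function of $u_s$ by $M_s$; on the interior of $\operatorname{supp}(u_s)$ the stationary identity $\Phi[M_s]\equiv 0$ holds. I would try the ansatz
\begin{equation*}
\bar M(r,t) = M_s(r) + e^{-\lambda t}\eta^+(r), \qquad \underline M(r,t) = \left(M_s(r) - e^{-\lambda t}\eta^-(r)\right)_+,
\end{equation*}
with $\eta^\pm\geq 0$ smooth cutoffs vanishing at $r=0$ and for large $r$, chosen so that $\bar u$ and $\underline u$ are legitimate radial densities of total mass $A$. Substituting into the mass evolution and linearizing around $M_s$ via the stationary identity, the supersolution/subsolution inequalities reduce, to leading order in $e^{-\lambda t}$, to a linear spectral condition on $(\eta^\pm,\lambda)$ involving $M_s'$ and the linearization of $\Phi$ around $M_s$. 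An admissible pair with $\lambda>0$ should exist because Proposition \ref{prop:stat_sol} guarantees $u_s$ is continuous and strictly positive on the interior of its support (yielding a quantitative lower bound on $M_s'$), and because $q\leq 2$ makes the attractive component of $\Phi$ Lipschitz and subordinate to the Newtonian self-interaction.

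For (c), the initial sandwiching $\underline M(\cdot,0)\leq M(\cdot,0)\leq \bar M(\cdot,0)$ is arranged by scaling the cutoffs $\eta^\pm$: the compact support of $u_0$ enforces the upper inequality everywhere, while the strict positivity of $u_0$ in a neighborhood of $0$ is needed for the lower inequality at small $r$ (the only delicate regime, since $M_s'$ may be positive at the origin). The comparison then yields $\|M(\cdot,t) - M_s\|_{L^\infty} \leq C e^{-\lambda t}$. To pass to Wasserstein distance, note that for radial measures of equal total mass on $\mathbb{R}^d$ the optimal transport is radial, so $W_p^p(u(\cdot,t), u_s) = \int_0^A |X(m,t)-X_s(m)|^p\,dm$, where $X(\cdot,t)$ and $X_s$ invert $M(\cdot,t)$ and $M_s$. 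A positive lower bound for $M_s'$ on the support of $u_s$ then converts $L^\infty$ closeness of $M$ into $L^\infty$ closeness of $X$ on $(0,A)$, giving exponential Wasserstein convergence.

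The hard step will be (b): producing the explicit cutoffs $\eta^\pm$ and a quantitative rate $\lambda>0$. Because $\Phi$ involves the nonlocal functional $J$, the linearized inequality is not pointwise and must be established via an integral or monotone-operator argument. The restriction $q\leq 2$ is essential here — it ensures that the attractive drift is Lipschitz and controllable in comparison to the Newtonian contribution, so that $J$ can be treated perturbatively and a spectral gap can be extracted from the strict positivity of $M_s'$ guaranteed by Proposition \ref{prop:stat_sol}. A secondary subtlety is the free boundary at the edge of $\operatorname{supp}(u_s)$: the barriers must merge with $M_s$ in a way compatible with the degeneracy of $u_s$ there, and one must verify that the solution cannot leak mass to arbitrarily large $r$, which again uses the confining effect of the attractive kernel together with $q\leq 2$.
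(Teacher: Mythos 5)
The proposal correctly identifies the three structural ingredients (mass comparison, contracting barriers, conversion of mass-function proximity to Wasserstein proximity), and (a) and (c) match the paper's framework. But step (b) — the construction of barriers — is where the proof lives, and your approach there is both different from the paper's and, as written, incomplete.

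Your ansatz is \emph{additive}: $\bar M = M_s + e^{-\lambda t}\eta^+$, with the rate $\lambda$ and profiles $\eta^\pm$ to be produced by linearizing the mass evolution around $M_s$ and extracting a spectral gap. You acknowledge this is ``the hard step'' and leave it as such, but it is not merely hard to execute, it is also structurally delicate: the mass-comparison argument requires the \emph{exact} supersolution/subsolution inequality, not the linearized one, so one would have to show that the quadratic-in-$e^{-\lambda t}$ corrections have a favorable sign for the chosen $\eta^\pm$, for all $t\geq 0$, including near the free boundary of $\operatorname{supp}(u_s)$ where $M_s'$ degenerates and the linearization around $M_s$ is not uniformly controlled. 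None of this is established in the proposal, and the claim that ``an admissible pair with $\lambda>0$ should exist because $q\leq 2$ makes the attractive component Lipschitz'' is not a proof. In short, there is a genuine gap: the spectral-gap and quadratic-remainder estimates that would make the additive barriers legitimate are missing.

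The paper (Lemma \ref{lemma:subsol} and Remark \ref{rmk:supersol}) avoids this entirely by using a \emph{multiplicative} ansatz: the barriers are dilations $\bar u(x,t) = R(t)^{-d}u_s(x/R(t))$. Because $u_s$ solves the stationary identity $\nabla K*u_s=0$ and the attraction kernel $|x|^q/q$ is a homogeneous power law, both nonlocal terms in the mass-velocity $\Phi$ transform explicitly under dilation; the Newtonian and power-law contributions to $\tilde M$ rescale by $1$ and $R^{d+q-2}$ respectively, so the sub/supersolution inequality collapses into the autonomous scalar ODE $\dot R = C_1(1-R^{d+q-2})R^{-d+1}$ with globally exponentially stable equilibrium $R=1$ (here $q>2-d$ gives the nondegeneracy and $q\leq 2$ the monotonicity used in the comparison). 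No linearization, no spectral problem, no free-boundary matching: the barriers are exact self-similar profiles, and the inequality is verified by the one-line scaling computation in Lemma \ref{lemma:subsol}. Moreover, because the barriers are dilations of $u_s$, the final Wasserstein step is also cleaner than your pseudo-inverse argument: the optimal transport from $\bar u$ to $u_s$ is the explicit map $x\mapsto x/R(t)$, which gives the rate directly from $|R(t)-1|$. If you want to salvage your additive route, the missing ingredients are (i) a rigorous spectral gap for the linearized velocity functional around $M_s$ with quantitative lower bound, and (ii) a sign-definite control of the quadratic remainder uniformly up to the free boundary; the paper's scaling trick is the observation that lets one bypass both.
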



\textbf{Acknowledgments:} 
The author is deeply grateful to her advisor Inwon Kim for her guidance, support and suggestions concerning this work.  The author also would like to thank Jacob Bedrossian for helpful discussion and comments.

\section{Mass Comparison for Radial Solutions}
In this section we consider the following type of diffusion-aggregation equation
\begin{equation}\label{general_pde}
u_t = c_1 \Delta u^m + \nabla \cdot (u \nabla (u*(c_2 \mathcal{N} + c_3 \mathcal{K}) +  V)),
\end{equation}
where $\mathcal{N}(x) = -\frac{1}{(d-2)\sigma_d} |x|^{2-d}$ is the Newtonian potential in $d\geq 3$, with $\sigma_d$ the surface area of the sphere $\mathbb{S}^{d-1}$ in $\mathbb{R}^d$.
We make the following assumptions on the kernel $\mathcal{K}$, the potential $V$ and the coefficients:
\begin{itemize}
\item[\textbf{(C)}]$c_1, c_3 \geq 0$, and $c_2 \in \mathbb{R}$ can be of any sign.
\item[\textbf{(K1)}]$\mathcal{K}$ is radially symmetric.
\item[\textbf{(K2)}]$\Delta \mathcal{K} \in L^1(\mathbb{R}^d)$, $\Delta \mathcal{K} \geq 0$ and is radially decreasing.
\item[\textbf{(V1)}]$V \in C^2(\mathbb{R}^d)$ is radially symmetric.

\end{itemize}

For a radially symmetric function $u(x,t)$, we define its mass function $M(r,t; u)$ by
\begin{equation}
M(r,t; u) := \int_{B(0,r)} u(x,t) dt,
\label{def_M}
\end{equation}
and we may write it as $M(r,t)$ if the dependence on the function $u$ is clear.  The following lemma describes the PDE satisfied by the mass function.

\begin{lemma}[Evolution of Mass Function]
Let $u(x,t)$ be a non-negative smooth radially symmetric solution to \eqref{general_pde}. Let $M(r,t) = M(r,t;u)$ be as defined in \eqref{def_M}.  Then $M(r,t)$ satisfies
\begin{equation}
\frac{\partial M}{\partial t} = c_1 \sigma_d r^{d-1} {\partial_r} (\frac{\partial_r M}{\sigma_d r^{d-1}})^m + \partial_r M \frac{c_2 M + c_3 \tilde M}{\sigma_d r^{d-1}} + \partial_r M \partial_r V, \label{pde_for_m}
\end{equation}
where $\tilde M(r,t; u) := \int_{B(0,r)} u*\Delta \mathcal{K} dx$.

\end{lemma}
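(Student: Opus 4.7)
The plan is to differentiate $M(r,t)=\int_{B(0,r)} u\,dx$ under the integral sign, substitute the PDE \eqref{general_pde} for $u_t$, and then use the divergence theorem to rewrite everything as a boundary term on $\partial B(0,r)$. Writing $\psi := u*(c_2\mathcal{N}+c_3\mathcal{K})+V$, this produces
\[
\partial_t M \;=\; c_1\!\int_{\partial B(0,r)}\!\!\partial_\nu u^m\,dS \;+\; \int_{\partial B(0,r)}\!\! u\,\partial_\nu\psi\,dS,
\]
and radial symmetry of $u$ (hence of $u^m$ and of $\psi$ by (K1), (V1)) turns each surface integral into the integrand at radius $r$ times $\sigma_d r^{d-1}$.

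Next I would translate $u$ itself into mass-function language. Since $u$ is radial, $M'(r,t)=\sigma_d r^{d-1}u(r,t)$, so $u=\partial_r M/(\sigma_d r^{d-1})$. Substituting this into the diffusion term immediately yields the first summand $c_1\sigma_d r^{d-1}\partial_r(\partial_r M/(\sigma_d r^{d-1}))^m$ of \eqref{pde_for_m}. For the drift term I need to rewrite $\partial_r\psi$ in terms of $M$ and $\tilde M$.

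The main step is handling the two convolutions. Because $\mathcal{N}$ is the Newtonian potential, $\Delta(u*\mathcal{N})=u$; in radial coordinates this reads $r^{1-d}\partial_r(r^{d-1}\partial_r(u*\mathcal{N}))=u$, and integrating from $0$ to $r$ gives $r^{d-1}\partial_r(u*\mathcal{N})=M/\sigma_d$, i.e.\ $\partial_r(u*\mathcal{N})=M/(\sigma_d r^{d-1})$. (This is just Newton's shell theorem.) The identical computation with $\mathcal{K}$ in place of $\mathcal{N}$ uses $\Delta(u*\mathcal{K})=u*\Delta\mathcal{K}$, which is radially symmetric by (K1) and integrable by (K2); the same ODE argument yields $\partial_r(u*\mathcal{K})=\tilde M/(\sigma_d r^{d-1})$, where $\tilde M$ is defined as in the statement. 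Combining these with $\partial_r V$ and multiplying by $\sigma_d r^{d-1}\cdot u = \partial_r M$ reproduces the remaining two terms of \eqref{pde_for_m}.

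The only mildly subtle point is justifying the ODE integration for the convolutions at $r=0$: one must check that $r^{d-1}\partial_r(u*\mathcal{N})\to 0$ as $r\to 0^+$, which follows from the smoothness assumption on $u$ and the local integrability of $\nabla\mathcal{N}$ (and of $\nabla\mathcal{K}$, using $\Delta\mathcal{K}\in L^1$). Everything else is an unpacking of the divergence theorem under radial symmetry, so I do not expect any real obstacle beyond keeping the factors of $\sigma_d r^{d-1}$ straight.
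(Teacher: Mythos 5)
Your proposal is correct and follows essentially the same route as the paper: differentiate $M$ in time, apply the PDE and divergence theorem to reduce to boundary integrals, and then use $\Delta(u*\mathcal{N})=u$ and $\Delta(u*\mathcal{K})=u*\Delta\mathcal{K}$ (your radial-ODE integration is just the divergence theorem in disguise) to identify $\partial_r(u*\mathcal{N})=M/(\sigma_d r^{d-1})$ and $\partial_r(u*\mathcal{K})=\tilde M/(\sigma_d r^{d-1})$. The only addition beyond the paper is your remark about behavior at $r=0$, which the paper leaves implicit.
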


\begin{proof}
Due to divergence theorem and radial symmetry of $u$, we have
\begin{equation}
\frac{\partial M}{\partial t} = \sigma_d r^{d-1} [ c_1\partial_r u^m + u(\partial_r(u*(c_2 \mathcal{N} + c_3 \mathcal{K}))+ \partial_r V)]. \label{eq:dm/dt}
\end{equation}
Note that radial symmetry of $u$ also gives 
\begin{equation}
u(r) = \frac{\partial_r M }{\sigma_d r^{d-1}}. \label{div0}
\end{equation} It remains to write $\partial_r (u*\mathcal{N})$ and $\partial_r (u*\mathcal{K})$ in terms of $M$.  For $\partial_r (u*\mathcal{N})$, divergence theorem gives
\begin{equation}
\partial_r (u*\mathcal{N}) = \frac{\int_{B(0,r)} \Delta u*\mathcal{N} dx}{\sigma_d r^{d-1}} = \frac{M(r,t)}{\sigma_d r^{d-1}}. \label{div1}
\end{equation}

We can similarly obtain
\begin{equation}
\partial_r (u*\mathcal{K}) = \frac{\int_{B(0,r)} u*\Delta \mathcal{K} dx}{\sigma_d r^{d-1}} = \frac{\tilde M(r,t)}{\sigma_d r^{d-1}}, \label{div2}
\end{equation}
where $\tilde M(r,t; u) := \int_{B(0,r)} u*\Delta \mathcal{K} dx$.  Plug \eqref{div0}, \eqref{div1} and \eqref{div2} into equation \eqref{eq:dm/dt}, and then we can obtain \eqref{pde_for_m}.
\end{proof}

\begin{definition}
Let $u_1$ and $u_2$ be two non-negative radially symmetric functions in $L^1(\mathbb{R}^d)$.  We say $u_1$ is \emph{less concentrated than} $u_2$, or $u_1 \prec u_2$, if
$$\int_{B(0,r)} u_1(x) dx \leq \int_{B(0,r)} u_2(x)dx\quad\hbox{ for all } r\geq 0.$$
\end{definition}

\begin{definition}
Let $u_1(x,t)$ be a non-negative, radially symmetric function in $L^1(\mathbb{R}^d)\cap L^\infty(\mathbb{R}^d)$, which is $C^1$ in its positive set. We say $u_1$ is a \emph{supersolution of  \eqref{general_pde} in the mass comparison sense} if $M_1(r,t):= M(r,t;u_1)$ is a supersolution of \eqref{pde_for_m}, i.e. $M_1(r,t)$ and $\tilde{M}_1(r,t):= \tilde{M}(r,t;u_1)$ satisfy
\begin{equation} \label{ineq_m1}
\frac{\partial M_1}{\partial t} \geq c_1 \sigma_d r^{d-1} {\partial_r} (\frac{\partial_r M_1}{\sigma_d r^{d-1}})^m + \partial_r M_1\frac{c_2 M_1 + c_3 \tilde M_1}{\sigma_d r^{d-1}} + \partial_r M_1 \partial_r V,
\end{equation}
in the positive set of $u_1$.

Similarly we can define a \emph{subsolution} of \eqref{general_pde} in the mass comparison sense.
\end{definition}

\begin{proposition}[\textbf{mass comparison}] Suppose $m>1$, and $c_1, c_2, c_3, V, \mathcal{K}$ satisfy the assumptions \textbf{(C),(K1),(K2),(V1)}. Let $u_1(x,t)$  be a supersolution and $u_2(x,t)$ be a subsolution of \eqref{general_pde} in the mass comparison sense for $t\in[0,T]$. Further assume that both $u_i$ are bounded, and $u_i$'s  preserve their mass over time, i.e.,
$\int u_1(\cdot, t) dx$ and $\int u_2(\cdot, t) dx$ stay constant for all $0\leq t \leq T.$  Then their mass functions are ordered for all times: i.e.,
if  $u_1(x,0) \succ u_2(x,0)$, then we have $u_1(x,t) \succ u_2(x,t)$ for all $t\in[0,T]$.
\label{comp_concentration}
\end{proposition}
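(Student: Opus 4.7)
The overall plan is a degenerate parabolic maximum-principle argument applied to $w(r,t):=M_1(r,t)-M_2(r,t)$. The hypotheses supply $w(\cdot,0)\ge 0$, $w(0,t)=0$ identically, and $w(\infty,t)=A_1-A_2\ge 0$ constant in time (by mass preservation plus the initial ordering), and I want to propagate $w\ge 0$ on $[0,T]$. To force strict inequality and a clean touching analysis, I would first prove the statement with $M_1$ replaced by the perturbation $M_1^\varepsilon(r,t):=M_1(r,t)+\varepsilon t$, which is a strict supersolution of \eqref{pde_for_m} with an extra $+\varepsilon$ on the left of \eqref{ineq_m1}, and then send $\varepsilon\downarrow 0$ at the end.

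A preliminary ingredient is the implication $M_1(\cdot,t)\ge M_2(\cdot,t)\Rightarrow \tilde M_1(\cdot,t)\ge \tilde M_2(\cdot,t)$ pointwise, needed to control the nonlocal $c_3$-term. Writing
\[
\tilde M(r,t;u)=\int u(y)\,\psi_r(y)\,dy,\qquad \psi_r(y):=\int_{B(0,r)}\Delta\mathcal{K}(x-y)\,dx,
\]
assumption \textbf{(K2)} makes $\psi_r$ a non-negative, radially decreasing function of $|y|$. A radial integration by parts rewrites $\int u\,\psi_r\,dy$ as $\int_0^\infty(-\psi_r'(s))\,M(s;u)\,ds+\psi_r(\infty)\cdot\!\int u$, a non-negative linear combination of the values of $M(\cdot;u)$ and of $\int u$; hence $M_1\ge M_2$ together with $A_1\ge A_2$ forces $\tilde M_1\ge \tilde M_2$.

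For the contradiction, suppose $M_1^\varepsilon\succ M_2$ fails first at time $t^*\in(0,T]$ with touching radius $r^*$. The perturbation gives $w_\varepsilon(0,t^*)=\varepsilon t^*>0$ and $w_\varepsilon(\infty,t^*)\ge\varepsilon t^*>0$, so $r^*\in(0,\infty)$; as a first interior minimum in $r$ we have $\partial_r w_\varepsilon(r^*,t^*)=0$, $\partial_r^2 w_\varepsilon(r^*,t^*)\ge 0$, and $\partial_t w_\varepsilon(r^*,t^*)\le 0$. Subtract the subsolution inequality for $M_2$ from the strict supersolution inequality for $M_1^\varepsilon$ at $(r^*,t^*)$: the Newtonian ($c_2$) term vanishes because $M_1=M_2$ there, the $V$-drift term vanishes because $\partial_r w_\varepsilon=0$, the porous-medium piece reduces to $c_1 m(\partial_r M/\sigma_d r^{d-1})^{m-1}\partial_r^2(M_1-M_2)/(\sigma_d (r^*)^{d-1})\ge 0$ (the coefficient is common since $\partial_r M_1=\partial_r M_2$, and the Hessian is ordered), and the nonlocal $c_3$ piece equals $c_3\,\partial_r M\,(\tilde M_1-\tilde M_2)/(\sigma_d(r^*)^{d-1})\ge 0$ by the previous step, using $c_3\ge 0$ and $\partial_r M\ge 0$. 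Collecting yields $\partial_t w_\varepsilon(r^*,t^*)\ge\varepsilon>0$, contradicting $\le 0$.

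The hard part will be the porous-medium degeneracy when $\partial_r M(r^*,t^*)=0$: the supersolution inequality \eqref{ineq_m1} is assumed only on the positive set of $u_1$, and the subsolution inequality only on that of $u_2$, so the previous computation applies directly only when $r^*$ lies strictly inside both supports. If $r^*$ instead sits on the boundary of one support, then $\partial_r w_\varepsilon(r^*,t^*)=0$ forces $u_1(r^*,t^*)=u_2(r^*,t^*)=0$, so both $u_i$ vanish on a one-sided neighborhood of $r^*$; there $w_\varepsilon$ is locally constant and the touching set is a closed interval whose opposite endpoint lies in the common positive set, where the touching analysis above may be invoked. This type of reduction, together with standard regularization of the degenerate equation \eqref{pde_for_m}, is the technical heart of the argument and parallels the classical porous-medium mass-comparison arguments of \cite{v, ky}.
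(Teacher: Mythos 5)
Your touching-point analysis has a genuine error. At the first touching time $t^*$ you have $M_1^\varepsilon(r^*,t^*)=M_2(r^*,t^*)$, i.e.\ $M_1(r^*,t^*)=M_2(r^*,t^*)-\varepsilon t^*$, so $M_1\ne M_2$ there. Moreover the supersolution inequality you actually have for $M_1^\varepsilon$ is the one inherited from $M_1$, i.e.\ it keeps $M_1$ and $\tilde M_1$ (not $M_1^\varepsilon$) inside the nonlocal terms; you cannot freely replace $M_1$ by $M_1+\varepsilon t$ on the right-hand side, since $c_2$ may be positive and that replacement need not preserve the supersolution property. Consequently the Newtonian difference does not vanish at $(r^*,t^*)$—it equals $-\varepsilon t^*\,c_2\,u_1(r^*,t^*)$—and your preliminary lemma cannot be invoked for the $c_3$ term, because at time $t^*$ you only know $M_1+\varepsilon t^*\ge M_2$, not $M_1\ge M_2$. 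Tracking these errors honestly (using the quantitative bound $\tilde M_2-\tilde M_1\le\|\Delta\mathcal{K}\|_1\sup_\rho(M_2-M_1)$, which your superposition argument does give) produces
\begin{equation*}
\partial_t w_\varepsilon(r^*,t^*)\ \ge\ \varepsilon\Bigl(1-t^*\,u_{\max}\bigl(|c_2|+c_3\|\Delta\mathcal{K}\|_1\bigr)\Bigr),
\end{equation*}
which contradicts $\partial_t w_\varepsilon\le 0$ only when $t^*<\bigl(u_{\max}(|c_2|+c_3\|\Delta\mathcal{K}\|_1)\bigr)^{-1}$. As written, your argument proves comparison only on a short time interval and would need to be iterated to reach $T$.

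This is precisely what the paper's choice of test function handles: instead of the additive perturbation $+\varepsilon t$, the paper uses the multiplicative weight $w=(M_2-M_1)e^{-\lambda t}$ and argues at a positive interior maximum of $w$, where $M_2-M_1>0$ and is not constrained to be small. There the porous-medium piece has the good sign, the $V$-piece cancels (since $\partial_r M_1=\partial_r M_2$), and the $c_2$ and $c_3$ pieces are bounded above by $u_{\max}(|c_2|+c_3\|\Delta\mathcal{K}\|_1)(M_2-M_1)$; taking $\lambda$ strictly larger than this constant then contradicts $w_t\ge 0$ uniformly on $[0,T]$, with no iteration. Your preliminary observation that $\tilde M$ is a nonnegative superposition of mass values (so pointwise ordering of $M$ forces ordering of $\tilde M$) is correct and is essentially the paper's argument approximating $\chi_{B(0,r)}*\Delta\mathcal{K}$ by bump functions—but what the maximum principle needs is the quantitative $\sup$-bound above, not the sign-only statement. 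Your worry about the degenerate free boundary (both $\partial_r M_i=0$ at the touching point) is legitimate, and the paper's treatment of it is also brief—it simply asserts the extremal point lies in the common positive set—but that is a secondary issue; the decisive gap in your argument as written is the missing $O(\varepsilon t^*)$ error in the $c_2$ and $c_3$ terms.
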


\begin{proof}Let $M_i(r,t)$ be the mass function for $u_i$, where $i=1,2$. We claim that $M_1(r,t) \geq M_2(r,t)$ for all $r\geq0$ and $t\in [0,T]$, which proves the proposition.

For the boundary conditions of $M_i$, note that
\begin{equation*}
\begin{cases}
M_1(0,t)=M_2(0,t)=0 \quad \text{for all $t\in[0,T]$},\\
\lim_{r\to\infty}(M_1(r,t)-M_2(r,t)) = \int_{\mathbb{R}^d} (u_1(x,0) - u_2(x,0)) dx\geq 0  \quad \text{for all $t\in[0,T]$}.
\end{cases}
\end{equation*}
As for initial data, we have $M_1(r,0)\geq M_2(r,0)$ for all $r\geq 0$.  

For given $\lambda>0$, we define
$$w(r,t):=\big(M_2(r,t)-M_1(r,t)\big)e^{-\lambda t},$$
where $\lambda$ is a large constant to be determined later. Suppose the claim is false, then $w$ attains a positive maximum at some point $(r_1, t_1)$ in the domain $(0,\infty)\times(0,T
]$.
Moreover, since the mass of both $u_1$ and $u_2$ are preserved over time and thus are ordered, we know that $(r_1,t_1)$ must lie inside the positive set for both $u_1$ and $u_2$, where $M_i$'s are $C^{2,1}_{x,t}$.

Since $w$ attains a maximum at $(r_1, t_1)$, the following inequalities hold at $(r_1, t_1)$:
\begin{eqnarray}
&&w_t \geq 0~\Longrightarrow \partial_t (M_2-M_1) \geq \lambda (M_2-M_1)\label{compare_t}\\[0.1cm]
&&w_r=0~\Longrightarrow\partial_r M_1 = \partial_r M_2  > 0\label{compare_r}\\[0.1cm]
&&w_{rr}\leq 0 ~\Longrightarrow  \partial_{rr} M_1 \geq \partial_{rr} M_2\label{compare_rr}
\end{eqnarray}
Now we will analyze the terms on the right hand side of \eqref{ineq_m1} one by one. For the first term, \eqref{compare_r} and \eqref{compare_rr} imply that
\begin{equation}\label{ineq:pme}
 c_1 {\partial_r} (\frac{\partial_r M_2}{\sigma_d r^{d-1}})^m - c_1 {\partial_r} (\frac{\partial_r M_1}{\sigma_d r^{d-1}})^m \leq 0 ~~~~\text{at } (r_1, t_1).
\end{equation}

For the term coming from Newtonian potential, we have
\begin{equation}\label{ineq:newtonian}
\partial_r M_1\frac{c_2 (M_2-M_1)}{\sigma_d r^{d-1}}= c_2 u_1(r_1, t_1) (M_2-M_1)\leq u_{\max} |c_2| (M_2-M_1),\end{equation}
where $u_{\max} := \max\{\sup_{\mathbb{R}^d\times[0,T]} u_1, \sup_{\mathbb{R}^d\times[0,T]} u_2\}$ is finite by assumption on $u_1$ and $u_2$.

 We next claim 
 \begin{equation}
 \partial_r M_1 \frac{c_3(\tilde M_2 - \tilde M_1)(r_1, t_1)}{\sigma_d r^{d-1}} \leq u_{\max} c_3 \|\Delta \mathcal{K}\|_1(M_2-M_1)(r_1, t_1).\label{ineq:K}
 \end{equation}
To prove the claim, note that $\tilde M_2 - \tilde M_1$ can be rewritten as 
  \begin{eqnarray}
\nonumber\tilde M_2(r_1, t_1) - \tilde M_1(r_1, t_1) &=& \int_{\mathbb{R}^d} ((u_2-u_1)*\Delta \mathcal{K}) ~\chi_{B(0,r_1)} dx\\
&=& \int_{\mathbb{R}^d} (u_2-u_1)~ (\chi_{B(0,r_1)} * \Delta \mathcal{K}) dx. \label{eq:m_tilde}
 \end{eqnarray}
Note that $\Delta \mathcal{K}\geq 0$ is radially decreasing due to assumption \textbf{(K2)}, thus $\chi_{B(0,r_1)} * \Delta \mathcal{K}$ is non-negative, radially decreasing and has maximum less than or equal to $\|\Delta \mathcal{K}\|_{1}$.  Therefore we can use a sum of bump function to approximate $\chi_{B(0,r_1)} * \Delta \mathcal{K}$, where the sum of the height is less than $\|\Delta \mathcal{K}\|_1$.
Hence
 $$
 \tilde M_2(r_1, t_1) - \tilde M_1(r_1, t_1)  \leq \|\Delta \mathcal{K}\|_{1} \sup_x (M_2 - M_1)(x, t_1) = \|\Delta \mathcal{K}\|_1 (M_2-M_1)(r_1,t_1),
 $$
 which proves the claim \eqref{ineq:K}. 
 Finally, for the last term in \eqref{ineq_m1}, as a result of \eqref{compare_r} we have 
 \begin{equation}
 \partial_r M_2 \partial V - \partial_r M_1 \partial V = 0.
 \label{ineq:V}
 \end{equation}
 Now we subtract \eqref{ineq_m1} with the corresponding equation for the subsolution.  Due to the inequalities \eqref{ineq:pme}, \eqref{ineq:newtonian}, \eqref{ineq:K} and \eqref{ineq:V}, we obtain that
$$\partial_t (M_2 - M_1) \leq u_{\max} (|c_2| + c_3\|\Delta K\|_1) (M_2 - M_1).$$
 Hence if we choose $\lambda > u_{\max} (|c_2| + c_3 \|\Delta K\|_1) $ in the beginning of the proof, the inequality above will contradict \eqref{compare_t}.
\end{proof}
\begin{remark}
If both $u_1$ and $u_2$ are supported in some compact set $B(0,R)$ for $0\leq t \leq T$, then the assumption \textbf{(K2)} can be replaced by \textbf{(K2')} as follows:

\textbf{\emph{(K2')}}~~$\Delta K \in L^1_{loc}(\mathbb{R}^d)$, $\Delta K \geq 0$ and is radially decreasing.

The proof under condition \textbf{(K2')} is almost the same, except that there is some change in \eqref{ineq:K}. Note that in this case \eqref{eq:m_tilde} still holds, and we can bound the maximum of $\chi_{B(0,r_1)} * \Delta \mathcal{K}$ by $\int_{B(0,R)} \Delta \mathcal{K} dx$.  This yields an inequality similar to \eqref{ineq:K}, with the $\|\Delta \mathcal{K}\|_1$ in the right hand side replaced by $\int_{B(0,R)} \Delta \mathcal{K} dx$. \label{remark:l1loc}
\end{remark}

\section{Application to Critical Patlak-Keller-Segel Models}

\subsection{Convergence towards stationary solution for critical mass}
In this subsection, we prove that every radial solution with mass $M_c$ and continuous, compactly supported initial data will be eventually attracted to some stationary solution within the family \eqref{family_stat_solution}.

If the initial data is bounded above and has the critical mass $M_c$, then it is proved in \cite{bcl} that the weak solution to \eqref{pks} exists globally in time. In the next lemma we prove the solution has a global (in time) $L^\infty$ bound. In addition, if the initial data is  radially symmetric and compactly supported, then the support of the solution would stay uniformly bounded in time.

\begin{lemma}
Suppose $d \geq 3$ and $m=2-2/d$. Consider the problem \eqref{pks} with initial data $u_0 \in L^1_+(\mathbb{R}^d; (1+|x|^2) dx) \cap L^\infty(\mathbb{R}^d)$, where $u_0$ is continuous and has critical mass $M_c$. Then the $L^\infty$ norm of the weak solution $u(x,t)$ is globally bounded, i.e. there exists $K>0$ depending on $\|u_0\|_\infty$ and $d$, such that $\|u(\cdot, t)\|_{L^\infty(\mathbb{R}^d)} \leq K_1$ for all $t\geq 0$.

If $u_0$ is radially symmetric and compactly supported in addition to the assumptions above, then there exists some $R_2>0$, such that $\{u(\cdot, t)>0\} \subseteq B(0,R_2)$ for all $t\geq0$, where $R_2$ depend on $d$ and $u_0$.
\label{lemma:u_bounded}
\end{lemma}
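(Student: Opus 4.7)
For the $L^\infty$ bound (which does not require radial symmetry), I would use the standard $L^p$-energy method. Testing \eqref{pks} against $u^{p-1}$, integrating by parts, and using $\Delta c = u$ gives the identity
$$\frac{d}{dt}\|u\|_p^p = -\frac{4mp(p-1)}{(p+m-1)^2}\|\nabla u^{(p+m-1)/2}\|_2^2 + (p-1)\|u\|_{p+1}^{p+1}.$$
The plan is to absorb the aggregation contribution into the diffusion term by interpolating $\|u\|_{p+1}^{p+1}$ between the conserved mass $\|u\|_1 = M_c$ and the Sobolev exponent $\|u\|_{d(p+m-1)/(d-2)}$, then applying Sobolev to $u^{(p+m-1)/2}$. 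When $m = 2-2/d$ the interpolation exponents align exactly (this matching is precisely why $M_c$ is the critical mass), producing $\|u\|_{p+1}^{p+1} \leq C_S M_c^{2/d}\|\nabla u^{(p+m-1)/2}\|_2^2$, so the sign of the resulting differential inequality reduces to comparing $C_S M_c^{2/d}$ with $4mp/(p+m-1)^2$. Together with the energy bound $0 \leq \mathcal{F}(u(t)) \leq \mathcal{F}(u_0)$ furnished by \eqref{vhls} at critical mass, one obtains uniform $L^p$ bounds for a suitable range of $p$, after which a Moser iteration bootstraps to $L^\infty$ with $K_1 = K_1(\|u_0\|_\infty, d)$.

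For the support bound, in the radial compactly supported case, I would invoke Proposition \ref{comp_concentration} with the stationary family $\{u_R\}_{R > 0}$ from \eqref{family_stat_solution} as barriers. Each $u_R$ classically solves \eqref{pks}, so it is both a super- and subsolution of \eqref{general_pde} in the mass-comparison sense, carries mass $M_c$, and is supported in $B(0,R)$. The strategy is: (i) find $R > 0$ such that $u_0 \succ u_R$ initially; (ii) apply Proposition \ref{comp_concentration} to conclude $u(\cdot,t) \succ u_R$ for all $t \geq 0$; (iii) observe that since $M(r;u_R) = M_c$ for $r \geq R$ and $M(r,t;u) \leq M_c$ always, this forces $\{u(\cdot,t) > 0\} \subseteq B(0, R)$, giving $R_2 := R$.

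The main obstacle is step (i). Using the scaling $M(r;u_R) = M(r/R;u_1)$, for small $r$ one has $M(r;u_R) \sim u_1(0)\omega_d r^d/R^d$, whereas continuity of $u_0$ at the origin yields $M(r,0;u_0) \gtrsim u_0(0)\omega_d r^d$; if $u_0(0) > 0$ and $R^d > u_1(0)/u_0(0)$, the ordering holds near $r = 0$. Enlarging $R$ further so that $\|u_R\|_\infty = u_1(0)/R^d$ lies below $u_0$ pointwise on $\operatorname{supp} u_0$, a monotonicity analysis of $g(r) := M(r,0;u_0) - M(r;u_R)$ --- using $g(0) = g(R) = g(\infty) = 0$, $g$ nondecreasing where $u_0 \geq u_R$ and nonincreasing where $u_0 < u_R$ --- closes the global ordering. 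The genuinely delicate subcase is when $u_0$ has interior zeros, in particular $u_0(0) = 0$; there I would first evolve the equation for a short time $\tau > 0$ to produce strict positivity near the origin, controlling the (finite) support growth on $[0,\tau]$ via a Barenblatt-type comparison (using that the porous-medium flow is a subsolution of \eqref{pks} in the mass-comparison sense), and then apply the argument above starting from $u(\cdot,\tau)$.
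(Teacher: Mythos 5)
Your argument for the support bound is essentially the paper's: use mass comparison against the stationary barrier $u_R$ (after finding $R$ with $u_0 \succ u_R$), and handle the degenerate case $u_0(0)=0$ by first comparing with the porous-medium flow to gain positivity at the origin after finite time while keeping the support compact. Those steps are correct and match the paper.

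The $L^\infty$ part, however, has a genuine gap. At the critical mass, the free-energy lower bound furnished by \eqref{vhls} \emph{degenerates}: since $M_c$ is defined precisely by $\frac{1}{m-1} = \frac{C^*}{2}M_c^{2/d}$, the sharp HLS gives only
$$\mathcal{F}(u) \;\geq\; \Big(\tfrac{1}{m-1}-\tfrac{C^*}{2}M_c^{2/d}\Big)\|u\|_m^m \;=\; 0,$$
so $\mathcal{F}(u(t)) \leq \mathcal{F}(u_0)$ yields \emph{no} control on $\|u(t)\|_m$ — the entropy and the interaction energy can both diverge while their sum remains bounded. Consequently the crucial starting $L^{p_0}$ bound for your Moser iteration is not available. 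The interpolation step itself is also delicate: the coefficient $\frac{4mp}{(p+m-1)^2}$ in front of the diffusion term decays like $1/p$, so the sign of the differential inequality $\frac{d}{dt}\|u\|_p^p$ hinges on a comparison between a Sobolev-type constant and the sharp HLS constant defining $M_c$, which does not obviously close (and certainly not uniformly in $p$). This is why the paper does not use an $L^p$-energy argument at all: it instead passes to the radially rearranged initial data $u_0^*$, picks $R_1$ small (depending only on $\|u_0\|_\infty$) so that $u_0^* \prec u_{R_1}$, runs mass comparison (Proposition \ref{comp_concentration}) to get $\bar u(\cdot,t) \prec u_{R_1}$ for all $t$ for the symmetrized solution, and then invokes the rearrangement comparison $u^*(\cdot,t) \prec \bar u(\cdot,t)$ from \cite{ky} to transfer the bound to the original (possibly non-radial) solution, giving $\|u(\cdot,t)\|_\infty \leq R_1^{-d}u_1(0)$ uniformly in time. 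That route avoids the degenerate energy estimate entirely and is what makes the critical-mass $L^\infty$ bound work.
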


\begin{proof}
In order to bound the $L^\infty$ norm of $u(\cdot, t)$, we first consider equation \eqref{pks} with symmtetrized initial data, which is described below.  Let $\bar u(\cdot, t)$ be the solution to \eqref{pks} with initial data $u_0^*$, where $ u_0^*(\cdot)$ is the radial decreasing rearrangement of $u_0$. Here the \emph{radial decreasing rearrangement} of a non-negative function $f$ is defined as
\begin{equation}\label{def:rearrangement}
f^*(x):=\int_0^\infty \chi_{\{f>t\}^*}(x) dt.
\end{equation}
Since $\bar u$ has a radially symmetric initial data and has mass $M_c$, due to \cite{bcl}, we readily obtain that $\bar u$ exists globally in time, and $\bar u$ is radially symmetric for all $t\geq 0$. We first prove that there is a global $L^\infty$ bound for $\bar u$. 

Since $\|\bar u(\cdot, 0)\|_\infty = \|u_0\|_\infty < \infty$, we can choose $R_1>0$ depending on $\|u_0\|_\infty$, where $R_1$ is sufficiently small such that $u_0^* \prec u_{R_1}$, where $u_{R_1}$ is as defined in \eqref{family_stat_solution}. Then the mass comparison result in Proposition \ref{comp_concentration} yields that  
\begin{equation}
\bar u(\cdot, t) \prec u_{R_1}\text{ for all }t\geq 0.
\end{equation}
Now we go back to the original solution $u$, and compare $u$ with $\bar u$. 
It is proved in Theorem 6.3 of \cite{ky} that 
$$u^*(\cdot, t) \prec \bar u(\cdot, t) \text{ for all }t\geq 0.$$
Combining the above two inequalities together, we readily obtain that
$$u^*(\cdot, t) \prec u_{R_1}  \text{ for all }t\geq 0,$$
which implies that $u^*(0, t) \leq u_{R_1}(0) \text{ for }t\geq 0$.
Note that $u^*(\cdot, t)$ is radially decreasing for all $t\geq 0$ by definition, and $u_{R_1}$ is radially decreasing due to \cite{bcl}. Hence the above inequality implies that 
\begin{equation}
\|u(\cdot, t)\|_\infty = \|u^*(\cdot, t)\|_\infty \leq \| u_{R_1}\|_\infty = R_1^{-d} u_1(0) \text{~ for }t\geq 0,
\end{equation}
thus $u$ has a global $L^\infty$ bound $R_1^{-d} u_1(0)$, where $u_1$ is as defined in \eqref{family_stat_solution}.

Next we hope to show that if $u_0$ is  radially symmetric and compactly supported in addition to the conditions above, the support of $u(\cdot, t)$ will stay in some compact set for all time.  We first prove it for the case where $u_0(0)>0$. Due to the continuity of $u_0$, we have $u_0$ is uniformly positive in a neighborhood of $0$. This enables us to choose $R_2>0$ sufficiently large such that $u_0 \succ u_{R_2} $, where $u_{R_2}$ is as defined in \eqref{family_stat_solution}. Proposition \ref{comp_concentration} again gives us
$u(\cdot, t) \succ u_{R_2} \text{ for all }t\geq 0,$
which implies that $$\text{supp}~u(\cdot, t) \subseteq \text{supp}~u_{R_2} = B(0, R_2)  \text{ for all }t\geq 0.$$

If $u_0(0)=0$, we claim that after some finite time $t_1$, $u(0,t_1)$ becomes positive, and $u(\cdot, t_1)$ has a compact support, where $t_1$ depends on $d$ and $u_0$. Then we can take $t_1$ as the starting time and argue as in the case $u_0(0)>0$.

Now we will prove the claim. This is done by performing the mass comparison between $u$ and $w$, where $w$ is the solution to the porous medium equation
\begin{equation}
w_t = \Delta w^m \text{ in }\mathbb{R}^d \times [0,\infty), \label{pme_w}
\end{equation}
with initial data $w(\cdot, 0) = u_0$. It can be readily checked that $u$ is a supersolution of \eqref{pme_w} in the mass comparison sense, hence Proposition \ref{comp_concentration} implies that $u(\cdot, t) \succ w(\cdot, t)$ for all $t\geq 0$, which yields 
\begin{equation}u(0,t) \geq w(0,t) \text{ ~and ~}\text{supp} ~u(\cdot, t) \subseteq \text{supp} ~w(\cdot, t) \text{ ~for all }t\geq 0.
\label{compare_u_w}
\end{equation} 
For  the porous medium equation \eqref{pme_w}, it is well known that the solution will eventually converge to the self-similar Barenblatt profile (see \cite{v} for example), which has a positive density at $0$ for all $t\geq 0$. Hence there exists some $t_1 \geq 0$ such that $w(0,t_1)>0$. In addition, $w(\cdot,t_1)$ has a compact support, due to the finite speed of propagation property of porous medium equation \cite{v}.  Therefore \eqref{compare_u_w} yields that  $u(0,t_1)\geq  w(0,t_1) > 0$ and $u(\cdot,t_1)$ is compactly supported, which prove the claim.
 \end{proof}

Next we prove that under the conditions in Lemma \ref{lemma:u_bounded}, every radial solution converges to some stationary solution in the family \eqref{family_stat_solution} as $t\to \infty$. To do this we investigate the free energy functional \eqref{energy}, and make use of the following result proved in \cite{bcl} and \cite{brb}: Let $u$ be a weak solution to \eqref{pks}, then it satisfies the following energy dissipation inequality for almost every $t$ during its existence:
\begin{equation}
\mathcal{F}(u(t)) + \int_0^t \int_{\mathbb{R}^d} u \left |\frac{m}{m-1} \nabla u^{m-1} + \nabla \mathcal{N}*u\right |^2 dxdt \leq \mathcal{F}(u_0).
\label{energy_ineq}
\end{equation}

\begin{theorem}
Suppose $d \geq 3$ and $m=2-2/d$.  Let $u(x,t)$ be the weak solution to \eqref{pks} with critical mass $M_c$ and  initial data $u_0 \in L^1_+(\mathbb{R}^d; (1+|x|^2) dx) \cap L^\infty(\mathbb{R}^d)$, where $u_0$ is continuous, radially symmetric and compactly supported, and satisfies $\nabla u_0^m \in L^2(\mathbb{R}^d)$. Then  there exists $R_0 > 0$ depending on $u_0$ and $d$, such that $u(\cdot, t) \to u_{R_0}$ in $L^\infty(\mathbb{R}^d)$ as $t \to \infty$,  where $u_{R_0}$ is as defined in \eqref{family_stat_solution}.
\label{thm:m=mc}
\end{theorem}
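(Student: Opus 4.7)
The plan is to combine the uniform regularity estimates coming from Lemma \ref{lemma:u_bounded} with the free-energy dissipation \eqref{energy_ineq} to extract subsequential limits of the flow that are stationary, and then close the argument via a virial identity for the second moment. First, Lemma \ref{lemma:u_bounded} supplies a uniform $L^\infty$ bound $K_1$ and a uniform support bound $B(0,R_2)$ for $u(\cdot,t)$. Since the aggregation drift $\nabla c=\nabla\mathcal{N}*u$ is also uniformly bounded in $t$ on this compact set, standard H\"older regularity theory for degenerate parabolic equations of porous-medium type with bounded drift gives a uniform modulus of continuity, so that $\{u(\cdot,t)\}_{t\ge 0}$ is precompact in $C(\mathbb{R}^d)$ by Arzel\`a--Ascoli.

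Second, at the critical mass $M=M_c$ the sharp Hardy--Littlewood--Sobolev inequality \eqref{vhls} forces $\mathcal{F}(u)\ge 0$, with equality precisely on the family \eqref{family_stat_solution}. Combined with \eqref{energy_ineq}, this shows that $\mathcal{F}(u(t))$ is monotone nonincreasing and bounded below, while the total dissipation
\[
\int_0^{\infty}\!\!\int_{\mathbb{R}^d} u\,\Bigl|\tfrac{m}{m-1}\nabla u^{m-1}+\nabla\mathcal{N}*u\Bigr|^2 dx\,dt
\]
is finite (finiteness of $\mathcal{F}(u_0)$ is what the assumption $\nabla u_0^m\in L^2$ helps justify). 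Hence there is a sequence $t_n\to\infty$ along which the inner integral tends to zero. By the precompactness above, a further subsequence gives $u(\cdot,t_n)\to u_\infty$ uniformly, where $u_\infty$ is continuous, radial, compactly supported, of mass $M_c$, and satisfies $\nabla\bigl(\tfrac{m}{m-1}u_\infty^{m-1}+\mathcal{N}*u_\infty\bigr)=0$ on its positive set. By the classification of critical-mass zero-energy profiles in \cite{bcl}, $u_\infty=u_{R_0}$ for some $R_0>0$.

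The main obstacle is to promote this subsequential convergence to convergence of the full trajectory, since a priori the $\omega$-limit set might contain $u_R$ for a range of $R$. For this I would use a virial identity: a direct integration by parts in \eqref{pks}, together with the identity $z\cdot\nabla\mathcal{N}(z)=-(d-2)\mathcal{N}(z)$ and the relation $1/(m-1)=d/(d-2)$ at the critical exponent, yields
\[
\frac{d}{dt}\int_{\mathbb{R}^d}|x|^2\,u(x,t)\,dx \;=\; 2(d-2)\,\mathcal{F}(u(t)) \;\ge\; 0.
\]
Thus the second moment is monotone nondecreasing in $t$; it is also uniformly bounded since $\mathrm{supp}\,u(\cdot,t)\subseteq B(0,R_2)$, so it converges to some $V_\infty$. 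Evaluating along the subsequence already constructed gives $V_\infty = R_0^2\int|x|^2 u_1\,dx$, and the strict monotonicity of the map $R\mapsto R^2\int|x|^2 u_1\,dx$ pins down the value of $R_0$ independent of the choice of subsequence. Combined with the $C^0$-precompactness of the trajectory, this forces $u(\cdot,t)\to u_{R_0}$ uniformly, and hence in $L^\infty(\mathbb{R}^d)$.
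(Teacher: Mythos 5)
Your proposal is correct and follows essentially the same route as the paper: uniform $L^\infty$ and support bounds from Lemma \ref{lemma:u_bounded}, vanishing dissipation from \eqref{energy_ineq} along a time sequence to extract a stationary limit via Arzel\`a--Ascoli, and the virial identity \eqref{virial} together with monotonicity of the second moment to pin down the unique $R_0$. The paper fills in the step from vanishing dissipation to stationarity of $u_\infty$ more carefully (uniform $L^2$ bounds on $\nabla u_n^m$, weak compactness, and an appeal to Lemma 10 of \cite{cjm}), but that is a detail within the same argument.
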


\begin{proof}
Due to Lemma \ref{lemma:u_bounded}, we obtain the existence of a weak solution globally in time, which has a global $L^\infty$ bound.  In addition, by treating $u*\mathcal{N}$ as an \emph{a priori} potential in \eqref{pks} and applying the continuity result in \cite{dib}, we obtain that $u(x,t)$ is uniformly continuous in space and time in $[\tau, \infty)$ for all $\tau>0$.   

Our preliminary goal is to find a time sequence $\{t_n\}$ which increases to infinity, such that $u(t_n)$ uniformly converges to some stationary solution as $n\to \infty$.  Note that $\mathcal{F}(u(\cdot, t))$ is non-increasing for almost every $t$ due to \eqref{energy_ineq}, and is bounded below as $t\to\infty$. This enables us to find a time sequence $\{t_n\}$ which increases to infinity, such that
\begin{equation}\label{u_n}
\lim_{n\to\infty} \int_{\mathbb{R}^d} u(t_n) \left |\frac{m}{m-1} \nabla u(t_n)^{m-1} + \nabla \mathcal{N}*u(t_n)\right |^2 dx = 0.
\end{equation}
We will slightly abuse the notation and denote $u(t_n)$ by $u_n$.   Note that $\{u_n\}$ is uniformly bounded and equicontinuous, hence Arzel\`{a}-Ascoli theorem enables us to find a subsequence of $\{u_n\}$, such that
\begin{equation}\label{unif_conv_to_u_infty}
u_n \to u_\infty \quad\text{uniformly in }n, 
\end{equation}
where $u_\infty$ is some radially symmetric and continuous function. Moreover, Lemma \ref{lemma:u_bounded} implies that the support of $\{u_n\}$ all stays in some fixed compact set, hence we have $u_\infty$ is compactly supported as well, and it has mass $M_c$. We will prove that $u_\infty$ is indeed a stationary solution later.

We next claim that $\{\nabla u_n^m\}$ are uniformly bounded in $L^2(\mathbb{R}^d)$.  To prove the claim, note that
\begin{eqnarray*}
\int_{\mathbb{R}^d} |\nabla u^m_n + u_n \nabla \mathcal{N}*u_n|^2 dx \leq \| u_n\|_\infty \int_{\mathbb{R}^d}  u_n \left|\frac{m}{m-1}\nabla u^{m-1}_n + \nabla \mathcal{N}*u_n\right|^2 dx,
\end{eqnarray*}
where the right hand side is uniformly bounded for all $n$.  In addition, since  $\{u_n\}$ are uniformly bounded and are all supported in some $B(0,R)$, we know $\int_{\mathbb{R}^d} u_n |\nabla \mathcal{N}*u_n|^2 dx$ is also uniformly bounded for all $n$.  Therefore triangle inequality yields the uniform boundedness of $\int_{\mathbb{R}^d} |\nabla u^m_n |^2 dx$, which proves the claim.

The uniform boundedness of $\{\nabla u_n^m\}$ in $L^2(\mathbb{R}^d)$ implies that $\{\nabla u_\infty^m\}$ is in $L^2(\mathbb{R}^d)$ as well. Moreover, we can find a subsequence of $\{u_n\}$ (which we again denote by $\{u_n\}$ for the simplicity of notation), such that 

\begin{equation}
\nabla u_n^{m} \rightharpoonup \nabla u_\infty^{m} \text{ as } n\to\infty  \text{ weakly in } L^1(\mathbb{R}^d_L: \mathbb{R}^d). \label{conv_2}
\end{equation}
Using \eqref{u_n} and the two convergence properties \eqref{unif_conv_to_u_infty} and \eqref{conv_2}, we can proceed in the same way as Lemma 10 in \cite{cjm} and prove that $u_\infty$ satisfies
\begin{equation}
\int_{\mathbb{R}^d}  u_\infty \left|\frac{m}{m-1}\nabla u^{m-1}_ \infty + \nabla \mathcal{N}*u_ \infty \right|^2 dx = 0, \label{u_is_stat}
 \end{equation}
which implies that $u_\infty$ is a radial stationary solution to \eqref{pks}, hence is indeed in the family \eqref{family_stat_solution}.

Next we will prove that $u(\cdot, t) \to u_\infty$ uniformly in $L^\infty(\mathbb{R}^d)$ as $t\to\infty$. In order to prove this, we make use of the monotonicity of the second moment of $u(\cdot, t)$ in time. By combining the following Virial identity
 \begin{equation} \label{virial}
 \frac{d}{dt}\int_{\mathbb{R}^d} |x|^2 u(x,t) dx = 2(d-2)\mathcal{F}[u(t)] \text{ for all }t
 \end{equation}
 with the fact that the minimizer of $\mathcal{F}$ has free energy $0$, it is shown in \cite{bcl} that
\begin{equation}
M_2[u(\cdot, t)] := \int_{\mathbb{R}^d} |x|^2 u(x,t)  dt \text{ is non-decrasing in }t.\label{m2_nondec}
\end{equation}
This implies that any subsequence of $u(\cdot, t)$ can converge to only one limit: if not, then we can find some another sequence $\{t_n' \}$ increasing to infinity, such that $u(t_n')$ converges to another stationary solution $u_\infty'$ uniformly as $n\to \infty$, where $u_\infty'$ is also in the family \eqref{family_stat_solution}. Since $u(t_n')$ are uniformly bounded and uniformly compactly supported, we have $M_2[u(t_n')] \to M_2[u_\infty']$. On the other hand for the time sequence $\{t_n\}$ we have $M_2[u(t_n)] \to M_2[u_\infty]$, hence \eqref{m2_nondec} implies that $u_\infty$ and $u_\infty'$ must have the same second moment. Since both $u_\infty$ and $u_\infty'$ are within the family \eqref{family_stat_solution}, they can have the same second moment only if they are the same stationary solution.
\end{proof}

\begin{remark}
\textup{Since the proof is done by extracting a subsequence of time, we are unable to obtain the rate of the convergence.  Moreover, it is unknown whether the limit $u_\infty$ has continuous dependence on the initial data $u_0$.  We also point out that the above proof is for radial solution only; for general initial data the difficulty lies in the fact that we are unable to bound the solution in some compact set uniform in time.}
\end{remark}

\subsection{Convergence towards self-similar solution for subcritical mass, radial case}

In this subsection, we prove that every radial solution with subcritical mass and compactly supported initial data would converge to some self-similar solution which is dissipating with the same scaling as the solution of the porous medium equation. 

Let $u$ be the weak solution to \eqref{pks}, with mass $A\in (0,M_c)$.  Following \cite{v} and \cite{bcl}, we rescale $u$ according to the scaling from porous medium equation:
\begin{equation}\label{def_mu}
\mu(\lambda, \tau) = (t+1) u(x,t);\quad \lambda = x(t+1)^{-1/d};\quad \tau=\ln (t+1).
\end{equation}
Then $\mu(\lambda, 0) = u(x,0)$, and $\mu(\lambda,\tau)$ solves the following rescaled equation
\begin{equation}
\mu_\tau = \Delta \mu^m + \nabla\cdot(\mu \nabla\frac{|\lambda|^2}{2d}) + \nabla \cdot(\mu \nabla(\mu*\mathcal{N})). \label{after_scaling}
\end{equation}
It is pointed out in Theorem 5.2 of \cite{bcl} that the free energy associated to the rescaled problem \eqref{after_scaling} is 
\begin{equation}\label{rescaled_energy}
\mathcal{G}(\mu(\cdot, t)) := \int_{\mathbb{R}^d} \left( \frac{m}{m-1} \mu^m + \frac{1}{2}\mu(\mathcal{N}*\mu) + \frac{|\lambda|^2 \mu}{2d} \right) d\lambda,
\end{equation}
and for any mass $A\in (0, M_c)$, there is a unique minimizer $\mu_A$ of $\mathcal{G}$ in $\mathcal{Z}_A$ subject to translation, where $\mathcal{Z}_A := \{h \in L^1(\mathbb{R}^d) \cap L^m(\mathbb{R}^d):  \|h\|_1 = M \text{ and } \int_{\mathbb{R}^d} |x|^2 h(x) dx \leq \infty\}$.  In addition, $\mu_A$ is continuous, radially decreasing and has a compact support, and $\mu_A$ satisfies
\begin{equation}
\dfrac{m}{m-1}\frac{\partial}{\partial r} \mu_A^{m-1} + \frac{r}{d} + \dfrac{M(r; \mu_A)}{\sigma_d r^{d-1}} = 0 \label{eq:stationary_mu}
\end{equation}
in its positive set, where the mass function $M$ is as defined in \eqref{def_M}.

Since $\mu_A$ is a stationary solution of \eqref{after_scaling}, if we go back to the original scaling, $\mu_A$ gives a self-similar solution of \eqref{pks}:
\begin{equation}
u_A(x,t) = (t+1)^{-1} \mu_A(\dfrac{x}{(t+1)^{1/d}}), \label{dissipating_self_similar}
\end{equation}
It is then asked in \cite{bcl} and \cite{b} that whether this self-similar solution attracts all global solutions.  

We will first prove that all radial solutions to the rescaled equation \eqref{after_scaling} converge to $\mu_A$.  The following lemma construct a family of explicit solutions to \eqref{after_scaling}, which all converge to $\mu_A$ exponentially fast as $\tau \to \infty$.

\begin{lemma}[\textbf{A family of explicit solutions}]  Suppose $d \geq 3$ and $m=2-2/d$. For $0<A<M_c$, we denote by $\mu_A$ the stationary solution of \eqref{after_scaling}. Let $\bar \mu$ be defined as 
\begin{equation}
\bar\mu (\lambda,\tau) := \dfrac{1}{R^{d}(\tau)} \mu_A(\dfrac{\lambda }{R(\tau)}),\label{def:mu_bar}
\end{equation} 
where $R(\tau)$ solves the ODE
\begin{equation}\left\{
\begin{split}
\dot{R}(\tau) &= \frac{1}{d}(\dfrac{1}{R^d}-1)R\\
R(0) &= R_0,
\end{split}\right. \label{def:R1}
\end{equation}
where $R_0 > 0$ is a constant.  Then for any $R_0 >0$, $\bar \mu(\lambda,\tau)$ is a weak solution to \eqref{after_scaling}.
\label{lemma:family_solution}
\end{lemma}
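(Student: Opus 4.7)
The plan is to verify directly that the ansatz $\bar\mu(\lambda,\tau)=R(\tau)^{-d}\mu_A(\lambda/R(\tau))$ satisfies the rescaled equation \eqref{after_scaling}, exploiting the critical scaling exponent $m=2-2/d$ and the stationary equation \eqref{eq:stationary_mu}. First I would rewrite \eqref{after_scaling} in the divergence form
\begin{equation*}
\mu_\tau=\nabla\cdot\Bigl[\mu\Bigl(\tfrac{m}{m-1}\nabla \mu^{m-1}+\tfrac{\lambda}{d}+\nabla(\mu*\mathcal{N})\Bigr)\Bigr],
\end{equation*}
which reduces the problem to identifying the bracketed velocity field for $\bar\mu$.

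The left-hand side is a pure chain-rule computation: setting $y=\lambda/R(\tau)$, differentiation in $\tau$ gives
\begin{equation*}
\partial_\tau\bar\mu=-\tfrac{\dot R}{R}\bigl(d\bar\mu+\lambda\cdot\nabla\bar\mu\bigr)=-\tfrac{\dot R}{R}\,\nabla\cdot(\lambda\bar\mu).
\end{equation*}
For the right-hand side, the key observation is the critical scaling. A straightforward change of variables gives $\nabla\bar\mu^{m-1}(\lambda)=R^{-d(m-1)-1}\nabla\mu_A^{m-1}(\lambda/R)$ and, using the homogeneity $\mathcal{N}(R\,\cdot)=R^{2-d}\mathcal{N}(\cdot)$ in $d\geq3$, $\nabla(\bar\mu*\mathcal{N})(\lambda)=R^{1-d}\nabla(\mu_A*\mathcal{N})(\lambda/R)$. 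Precisely because $m=2-2/d$ forces $d(m-1)=d-2$, both factors collapse to the \emph{same} power $R^{1-d}$; this is the critical-exponent miracle that makes the ansatz work.

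Now I apply the stationary equation \eqref{eq:stationary_mu}, which in vectorial form reads $\frac{m}{m-1}\nabla\mu_A^{m-1}(y)+\nabla(\mu_A*\mathcal{N})(y)=-y/d$ throughout the positive set of $\mu_A$. Pulling the common factor $R^{1-d}$ outside and evaluating at $y=\lambda/R$ yields, in the positive set of $\bar\mu$,
\begin{equation*}
\tfrac{m}{m-1}\nabla\bar\mu^{m-1}+\nabla(\bar\mu*\mathcal{N})=R^{1-d}\cdot\bigl(-\tfrac{\lambda/R}{d}\bigr)=-\tfrac{R^{-d}}{d}\lambda,
\end{equation*}
so the bracketed velocity collapses to $\tfrac{\lambda}{d}(1-R^{-d})$ and the right-hand side becomes $\tfrac{1-R^{-d}}{d}\nabla\cdot(\lambda\bar\mu)$. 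Matching this against $\partial_\tau\bar\mu=-\tfrac{\dot R}{R}\nabla\cdot(\lambda\bar\mu)$ forces $\dot R/R=\tfrac{1}{d}(R^{-d}-1)$, which is exactly the ODE \eqref{def:R1}. Outside the positive set $\bar\mu\equiv 0$, so both sides vanish and the computation is consistent.

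The only substantive issue is regularity: $\mu_A$ is merely continuous with $\mu_A^{m-1}$ Lipschitz up to the boundary of its support, so the manipulations above cannot be made pointwise in the classical sense across $\partial\{\bar\mu>0\}$. I would therefore interpret \eqref{after_scaling} in the usual weak sense (test against $\varphi\in C_c^\infty$, integrate by parts in the spatial divergence), in which formulation the flux $\bar\mu\bigl(\tfrac{m}{m-1}\nabla\bar\mu^{m-1}+\tfrac{\lambda}{d}+\nabla(\bar\mu*\mathcal{N})\bigr)=\tfrac{1-R^{-d}}{d}\lambda\bar\mu$ is a bounded continuous vector field with compact support and the preceding identity becomes an honest equality of distributions. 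The main obstacle is really only this bookkeeping around the free boundary; the algebraic verification itself is forced by the critical exponent.
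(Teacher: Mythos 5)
Your proof is correct and follows essentially the same route as the paper: both rewrite \eqref{after_scaling} as a continuity equation, use the chain rule to get $\partial_\tau\bar\mu=-\tfrac{\dot R}{R}\nabla\cdot(\lambda\bar\mu)$, exploit the critical exponent $m=2-2/d$ so that the diffusion and interaction terms scale with the \emph{same} power $R^{1-d}$, and then invoke the stationary equation \eqref{eq:stationary_mu} to collapse the velocity field to $\tfrac{\lambda}{d}(1-R^{-d})$, forcing the ODE \eqref{def:R1}. The only cosmetic difference is that the paper works in radial coordinates via the mass function $M(r,\tau;\bar\mu)$ for the nonlocal term, while you keep everything vectorial via the homogeneity of $\mathcal{N}$; you also make explicit the weak-formulation bookkeeping at the free boundary, which the paper leaves implicit.
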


\begin{proof}
Since $\bar\mu$ is a self-similar function, it can be easily verified that $\bar \mu$ solves the following transport equation
$$\bar \mu_\tau + \nabla \cdot (\bar \mu \frac{\dot R(\tau)}{R(\tau)} \lambda) = 0.$$
On the other hand, note that \eqref{after_scaling} can also be written as a transport equation
$$\mu_\tau = \nabla \cdot (\mu \vec v),$$
where 
$$\vec v = \dfrac{m}{m-1} \nabla \mu^{m-1} + \frac{\lambda}{d} + \dfrac{M(|\lambda|,\tau;\mu)}{\sigma_d |\lambda|^{d-1}}\dfrac{\lambda}{|\lambda|}.$$
Therefore, to prove that $\bar \mu$ solves \eqref{after_scaling}, it suffices to verify that 
\begin{equation}
-\frac{\dot R(\tau)}{R(\tau)} r =  \dfrac{m}{m-1} \underbrace{\dfrac{\partial}{\partial r} \bar\mu^{m-1}}_{T_1} + \frac{r}{d} + \underbrace{\dfrac{M(r,\tau;\bar \mu)}{\sigma_d r^{d-1}}}_{T_2} \text{ ~for }0\leq r \leq R(\tau).\label{bar_mu}
\end{equation}
Since $\bar \mu$ is a rescaling of $\mu_A$,
\begin{eqnarray*}
T_1 &=& \dfrac{1}{R^{(m-1)d+1}} \left(\dfrac{\partial}{\partial r} \mu_A^{m-1}\right) (\frac{r}{R(\tau)}) = \dfrac{1}{R^{d-1}} \left(\dfrac{\partial}{\partial r} \mu_A^{m-1}\right) (\frac{r}{R(\tau)}),
\end{eqnarray*}
where in the last inequality we used the fact that $m$ is the critical power, i.e. $m=2-2/d$. For $T_2$ in \eqref{bar_mu}, the definition of $\bar \mu$ gives
\begin{eqnarray*}
T_2 &=& \dfrac{1}{R(\tau)^{d-1}} \dfrac{M(\frac{r}{R(\tau)}; \mu_A)}{\sigma_d (\frac{r}{R(\tau)})^{d-1}}  \text{ ~for }0\leq r \leq R(\tau).
\end{eqnarray*}
Now recall that $\mu_A$ satisfies \eqref{eq:stationary_mu} in its positive set, which implies
\begin{eqnarray*}
\text{RHS of \eqref{bar_mu}} &=& \dfrac{1}{R(\tau)^{d-1}}\left(  \frac{m}{m-1}\dfrac{\partial}{\partial r}\mu_A^{m-1} (\frac{r}{R(\tau)}) + \dfrac{r}{dR(\tau)} + \dfrac{M(\frac{r}{R(\tau)}; \mu_A)}{\sigma_d (\frac{r}{R(\tau)})^{d-1}}  \right) + \frac{1}{d}(1 - \dfrac{1}{R^{d}(\tau)})r\\
&=& \frac{1}{d}(1 - \dfrac{1}{R^{d}(\tau)})r\\
&=& -\frac{\dot R(\tau)}{R(\tau)} r,
\end{eqnarray*}
where the last equality comes from the definition of $R$ in \eqref{def:R1}. This verifies that \eqref{bar_mu} is indeed true, which completes the proof.
\end{proof}

Next we use the family of explicit solution constructed above as barriers, and perform mass comparison between the real solution and the barriers.

\begin{proposition}
Suppose $d \geq 3$ and $m=2-2/d$.  Let $\mu(\lambda,\tau)$ be a radial weak solution to \eqref{after_scaling} with mass $0<A<M_c$, where the initial data  $\mu(\cdot, 0) \in L^1_+(\mathbb{R}^d; (1+|x|^2) dx) \cap L^\infty(\mathbb{R}^d)$ is continuous and compactly supported. Then as $\tau \to \infty$, the mass function of $\mu$ converges to the mass function of $\mu_A$ exponentially, i.e. 
$$\sup_r |M(r, \tau; \mu) - M(r, \tau; \mu_A)| \leq Ce^{-\tau},$$
where $\mu_A$ is as defined in \eqref{eq:stationary_mu}, and $C$ depends on $d, A$ and $\mu(\cdot, 0)$.
\label{prop:conv_rescale}
\end{proposition}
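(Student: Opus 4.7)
The plan is to sandwich $\mu(\cdot,\tau)$ between two members of the explicit self-similar family $\bar\mu$ constructed in Lemma \ref{lemma:family_solution}, using the mass comparison principle (Proposition \ref{comp_concentration}), and then read off the exponential rate directly from the scalar ODE \eqref{def:R1}.

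First I would solve \eqref{def:R1} explicitly. The substitution $S(\tau) := R(\tau)^d$ linearizes the ODE to $\dot S = 1 - S$, so $R(\tau)^d = 1 + (R_0^d - 1) e^{-\tau}$ and, in particular, $|R(\tau) - 1| \leq C(R_0) e^{-\tau}$ for all $\tau \geq 0$. Next I would set up the two-sided mass sandwich at the initial time: pick $0 < R_-^0 < R_+^0$ so that
$$\bar\mu_-(\cdot, 0) \,\succ\, \mu(\cdot, 0) \,\succ\, \bar\mu_+(\cdot, 0),$$
where $\bar\mu_\pm$ is the explicit solution with $R(0) = R_\pm^0$. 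The upper-concentration bound is easy: since $\|\mu(\cdot,0)\|_\infty$ is finite and $\mu_A$ is radially decreasing with $\mu_A(0)>0$, choosing $R_-^0$ small enough makes $\bar\mu_-(\cdot,0)$ sufficiently concentrated near $0$ (use the estimates $M(r,0;\mu(\cdot,0)) \lesssim \|\mu(\cdot,0)\|_\infty r^d$ near $r = 0$ and $M(r,0;\bar\mu_-)= A$ for $r\geq R_-^0 L$, where $L$ is the support radius of $\mu_A$). The lower-concentration bound is more delicate, because it requires $M(r,0;\mu(\cdot,0)) > 0$ to dominate $M(r/R_+^0;\mu_A)$ for small $r$, which fails if $\mu(\cdot,0)$ vanishes near the origin. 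To circumvent this I would borrow the porous-medium regularization from Lemma \ref{lemma:u_bounded}: compare $\mu$ with the rescaled porous-medium solution to produce a time $\tau_1$ after which $\mu(\cdot,\tau_1)$ is strictly positive on some $B(0,\delta_0)$ and still compactly supported, uniformly in the subsequent times; then pick $R_+^0$ large enough from $\tau_1$ onward, so that the diffuse profile $\bar\mu_+(\cdot,0)$ is dominated in mass by $\mu(\cdot,\tau_1)$.

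Third, I would apply Proposition \ref{comp_concentration} with $c_1 = c_2 = 1$, $c_3 = 0$, and $V(\lambda) = |\lambda|^2/(2d)$ (assumptions \textbf{(C)} and \textbf{(V1)} are clearly satisfied) to propagate the sandwich: $\bar\mu_-(\cdot, \tau) \succ \mu(\cdot, \tau + \tau_1) \succ \bar\mu_+(\cdot, \tau)$ for all $\tau \geq 0$. Fourth, I would convert this sandwich to a mass-function estimate. A direct change of variables gives
$$M(r, \tau; \bar\mu_\pm) \,=\, M\bigl(r/R_\pm(\tau); \mu_A\bigr).$$
Because $\mu_A$ is bounded and supported in $B(0,L)$, the scalar function $s\mapsto M(s;\mu_A)$ is globally Lipschitz and constant outside $[0,L]$, so
$$\sup_{r\geq 0} \bigl|M(r/R_\pm(\tau);\mu_A) - M(r;\mu_A)\bigr| \,\leq\, C\,|R_\pm(\tau) - 1| \,\leq\, C'\, e^{-\tau},$$
invoking Step 1 for the last inequality. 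Combining the sandwich with these uniform bounds on the barriers yields the desired $\sup_r |M(r,\tau;\mu) - M(r;\mu_A)| \leq C e^{-\tau}$.

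The main obstacle is the construction of the lower-concentration barrier $\bar\mu_+$: a direct sandwich at $\tau = 0$ is not always possible, and one must first run the equation for a short time to arrive at a configuration that is strictly positive near the origin. Once that auxiliary regularization is in place, the remainder of the argument is mechanical, driven entirely by the explicit decay $R(\tau)^d - 1 = (R_0^d - 1) e^{-\tau}$.
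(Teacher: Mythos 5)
Your proposal is correct and follows essentially the same route as the paper: sandwich $\mu$ between two explicit rescaled copies of $\mu_A$ from Lemma \ref{lemma:family_solution}, propagate with Proposition \ref{comp_concentration}, and read the exponential rate off the ODE \eqref{def:R1}. The only additions are cosmetic — you solve the ODE explicitly via $S=R^d$ rather than quoting the decay, and you spell out the porous-medium regularization step for the $\mu(0,0)=0$ case, which the paper dispatches by referring back to Lemma \ref{lemma:u_bounded}.
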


\begin{proof}
Without loss of generality we assume that $\mu(0,0)>0$. (When $\mu(0,0)=0$, from the same discussion in \ref{lemma:u_bounded}, $\mu(0,\tau)$ will become positive after some finite time.)   Then we can find $R_{01}$ sufficiently small and $R_{02}$ sufficiently large, such that
$$\dfrac{1}{R_{02}^d}\mu_A(\dfrac{\cdot}{R_{02}})  \prec \mu(\cdot, 0) \prec \dfrac{1}{R_{01}^d}\mu_A(\dfrac{\cdot}{R_{01}}),$$
where in the first inequality we used that $\mu(0,0)>0$, and in the second inequality we used $\|\mu(\cdot, 0) \|_\infty < \infty$.

Let $\mu_1(\lambda, \tau)$ and $\mu_2(\lambda, \tau)$ be defined as in \eqref{def:mu_bar}, with $R(0)$ equal to $R_{01}$ and $R_{02}$ respectively.  Then Lemma \ref{lemma:family_solution} says that both $\mu_1$ and $\mu_2$ are solutions to \eqref{after_scaling}.  Note that \eqref{after_scaling} is a special case of  \eqref{general_pde}, hence the mass comparison result in Proposition \ref{comp_concentration} holds here as well, which gives
$$ \mu_2(\cdot, \tau) \prec \mu(\cdot, \tau) \prec \mu_1(\cdot, \tau) \text{ for all }\tau\geq 0,$$
or in other words,
$$ M(\cdot, \tau; \mu_2) \leq M(\cdot, \tau; \mu) \leq M(\cdot, \tau; \mu_1) \text{ for all }\tau\geq 0.$$

It remains to show that 
$$\sup_r |M(r, \tau; \mu_i) - M(r; \mu_A)| \leq Ce^{-\tau} \text{ for }i=1,2.$$
Recall that both $\mu_i$'s are scalings of $\mu_A$ with scaling coefficient $R_i(\tau)$, hence
\begin{equation}
|M(r, \tau; \mu_i) - M(r; \mu_A)| =
|M(\frac{r}{R_i(\tau)}; \mu_A) - M(r; \mu_A)|. \label{m_diff}
\end{equation}
Since $\mu_A$ is bounded and compactly supported, it suffices to show that $R_i(\tau) \to 1$ exponentially as $r\to \infty$.  Recall that $\dot R_i = \frac{1}{d}(\frac{1}{R_i^d}-1)R_i$ with initial data $R_{0i}$  for $i=1,2$, a simple calculation reveals that $|R_i(\tau)-1| \leq C_ie^{-\tau}$, where $C_i$ depends on $R_{0i}$.  This implies that the right hand side of \eqref{m_diff} decays like $e^{-\tau}$, which completes the proof.
\end{proof}

Making use of the explicit barriers $\mu_1$ and $\mu_2$ constructed in the proof of Proposition \ref{prop:conv_rescale}, we get exponential convergence of $\mu/A$ towards the $\mu_A/A$ in the $p$-Wasserstein metric, which is defined below. Note that the Wasserstein metric is natural for this problem, since as pointed out in \cite{ags} and \cite{cmv}, the equation \eqref{pks} is a gradient flow of the 
free energy \eqref{energy} with respect to the 2-Wasserstein metric. 
\begin{definition} \label{def_wasserstein}
Let $\mu_1$ and $\mu_2$ be two (Borel) probability measure on $\mathbb{R}^d$ with finite $p$-th moment. Then the \emph{$p$-Wasserstein distance} between $\mu_1$ and $\mu_2$ is defined as
$$
W_p(\mu_1,\mu_2) := \Big(\inf_{\pi\in \mathcal{P}(\mu_1, \mu_2)} \Big\{ \int_{\mathbb{R}^d\times\mathbb{R}^d} |x-y|^p \pi(dxdy)\Big\} \Big)^\frac{1}{p},$$
where $\mathcal{P}(\mu_1, \mu_2)$ is the set of all probability measures on $\mathbb{R}^d\times \mathbb{R}^d$ with first marginal $\mu_1$ and second marginal $\mu_2$.\end{definition}
\begin{corollary}\label{wasserstein} 
Let $d\geq 3$, and $m=2-\frac{2}{d}$. Let $\mu(\lambda,\tau)$ and $\mu_A$ be as given in Proposition \ref{prop:conv_rescale}. Then for all $p>1$, we have
$$W_p(\frac{\mu(\cdot,\tau)}{A}, \frac{\mu_A}{A}) \leq C e^{- \tau},$$
where $C$ depends on $d$ and $\mu(\cdot, 0)$.
\end{corollary}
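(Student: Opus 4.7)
My plan is to construct an explicit radial coupling between $\mu(\cdot,\tau)/A$ and $\mu_A/A$ and then bound its cost by translating the mass sandwich from the proof of Proposition \ref{prop:conv_rescale} into a pointwise bound on the radial quantile functions.

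First, I would reduce the problem to a one-dimensional quantile comparison. Since both $\mu(\cdot,\tau)/A$ and $\mu_A/A$ are radial probability measures on $\mathbb{R}^d$, one may consider the transport map $T_\tau(x)=\phi(|x|,\tau)\,x/|x|$, where the radial profile is chosen so that $T_\tau$ pushes $\mu(\cdot,\tau)/A$ forward to $\mu_A/A$: explicitly, $\phi(r,\tau) = G_A(F(r,\tau))$ with $F(r,\tau):=M(r,\tau;\mu)/A$, $F_A(r):=M(r;\mu_A)/A$, and $G$, $G_A$ denoting the right-continuous inverses of $F(\cdot,\tau)$ and $F_A$. This is an admissible coupling, so
\[
W_p\!\left(\tfrac{\mu(\cdot,\tau)}{A},\tfrac{\mu_A}{A}\right)^{\!p} \;\le\; \int_{\mathbb{R}^d}\big|\,|x|-\phi(|x|,\tau)\big|^p\,\tfrac{\mu(x,\tau)}{A}\,dx \;=\; \int_0^1 |G(s,\tau)-G_A(s)|^p\,ds,
\]
where the final equality is the change of variables $s=F(r,\tau)$, using that $\sigma_d r^{d-1}\mu(r,\tau)/A = \partial_r F(r,\tau)$ by radial symmetry.

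Next, I would import the barriers. In the proof of Proposition \ref{prop:conv_rescale}, the explicit solutions $\mu_i(\lambda,\tau)=R_i(\tau)^{-d}\mu_A(\lambda/R_i(\tau))$, $i=1,2$, with $R_1(0)<1<R_2(0)$, satisfy the sandwich $\mu_2(\cdot,\tau)\prec\mu(\cdot,\tau)\prec\mu_1(\cdot,\tau)$, and the explicit ODE \eqref{def:R1} gives $|R_i(\tau)-1|\le C_i e^{-\tau}$. Since $\mu_i/A$ is the pushforward of $\mu_A/A$ under the dilation $x\mapsto R_i(\tau)\,x$, its quantile function is simply $R_i(\tau)\,G_A(s)$. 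Inverting the CDF ordering $F_{\mu_2}\le F\le F_{\mu_1}$ and using $R_1(\tau)\le 1\le R_2(\tau)$ gives
\[
R_1(\tau)\,G_A(s) \;\le\; G(s,\tau) \;\le\; R_2(\tau)\,G_A(s),\qquad s\in(0,1),
\]
hence $|G(s,\tau)-G_A(s)|\le (R_2(\tau)-R_1(\tau))\,G_A(s)\le C e^{-\tau}\,G_A(s)$.

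Inserting this pointwise bound into the previous display and noting that the $p$-th moment of $\mu_A$ is finite because $\mu_A$ is bounded and compactly supported yields
\[
W_p\!\left(\tfrac{\mu(\cdot,\tau)}{A},\tfrac{\mu_A}{A}\right)^{\!p} \;\le\; C^p e^{-p\tau}\int_0^1 G_A(s)^p\,ds \;=\; \frac{C^p e^{-p\tau}}{A}\int_{\mathbb{R}^d}|x|^p\mu_A(x)\,dx,
\]
and taking $p$-th roots completes the argument. The only nonroutine step is writing down the correct radial coupling so that the quantile formula can be used as an upper bound; once that is in place, the ODE estimate on $R_i(\tau)$ already established in the proof of Proposition \ref{prop:conv_rescale} does all the work.
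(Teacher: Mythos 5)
Your proposal is correct and follows essentially the approach the paper delegates to Corollary 5.8 of \cite{ky}: translate the mass sandwich $\mu_2 \prec \mu \prec \mu_1$ into a pointwise bound on radial quantile functions, bound $W_p$ by the explicit radial transport cost, and invoke the exponential convergence $|R_i(\tau)-1|\leq C_i e^{-\tau}$ from the ODE \eqref{def:R1}. One small point worth making explicit: the inequality $R_1(\tau)\leq 1\leq R_2(\tau)$ (which you need for $|G-G_A|\leq (R_2-R_1)G_A$) holds because $R=1$ is an equilibrium of \eqref{def:R1} toward which the flow is monotone, so $R_{01}<1$ stays below $1$ and $R_{02}>1$ stays above $1$; otherwise the argument is complete.
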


\begin{proof}
See the proof of Corollary 5.8 in \cite{ky}. 
\end{proof}

Rescaling back to the original space and time variables, we have $$u(x,t) = \frac{1}{t+1} \mu\left(\dfrac{x}{(t+1)^{1/d}}, \ln(t+1)\right).$$ Thus Corollary \ref{wasserstein} immediately yields the algebraic convergence towards the dissipating self-similar solution \eqref{dissipating_self_similar}:

\begin{corollary}
Let $u(x,t)$ be a radial solution to \eqref{pks} with mass $0<A<M_c$, where the initial data  $u(\cdot, 0) \in L^1_+(\mathbb{R}^d; (1+|x|^2) dx) \cap L^\infty(\mathbb{R}^d)$ is continuous and compactly supported. Let $u_A$ be the dissipating self-similar solution with mass $A$ defined in \eqref{dissipating_self_similar}. Then $u/A$ converges to $u_A/A$ in Wasserstein distance algebraically fast as $t\to \infty$. More precisely, 
$$W_p(\frac{u(\cdot,t)}{A}, \frac{u_A}{A}) \leq C t^{-(d-1)/d},$$
where $C$ depends on $d, A$ and $u(\cdot, 0)$.
\label{thm:conv_m<mc}
\end{corollary}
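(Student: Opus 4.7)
The plan is to derive this corollary directly from Corollary \ref{wasserstein}, which already furnishes exponential decay in the self-similar variables; the only work is translating that bound back into physical variables via \eqref{def_mu}, and this amounts to tracking a single dilation factor through the $p$-Wasserstein distance.

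First I would observe that, by the change of variables \eqref{def_mu} together with \eqref{dissipating_self_similar}, both $u(\cdot,t)$ and $u_A(\cdot,t)$ are the pushforwards of $\mu(\cdot,\tau)$ and $\mu_A$ (respectively) under the \emph{same} dilation $\Phi_t(\lambda):=(t+1)^{1/d}\lambda$, with $\tau=\ln(t+1)$. Since all the total masses equal $A$, dividing by $A$ makes each side a probability measure and the pushforward identity is preserved: $u(\cdot,t)/A=\Phi_{t,\#}(\mu(\cdot,\tau)/A)$ and $u_A/A=\Phi_{t,\#}(\mu_A/A)$. Next I would invoke the standard scaling of the Wasserstein distance under dilations, which follows at once from Definition \ref{def_wasserstein} by pushing both marginals of the coupling $\pi$ through $\Phi_t$: for $c>0$ and $\Phi(x)=cx$ one has $W_p(\Phi_\#\nu_1,\Phi_\#\nu_2)=c\,W_p(\nu_1,\nu_2)$. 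Applied with $c=(t+1)^{1/d}$ this gives
$$W_p\Bigl(\frac{u(\cdot,t)}{A},\frac{u_A}{A}\Bigr) = (t+1)^{1/d}\,W_p\Bigl(\frac{\mu(\cdot,\tau)}{A},\frac{\mu_A}{A}\Bigr).$$
Combining this with Corollary \ref{wasserstein}, which provides $W_p(\mu(\cdot,\tau)/A,\mu_A/A)\leq C e^{-\tau}=C(t+1)^{-1}$, yields the bound $C(t+1)^{-(d-1)/d}$, which is at most $C t^{-(d-1)/d}$ for $t\geq 1$ after enlarging $C$ to absorb the transition $(t+1)\sim t$.

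I do not anticipate any substantive obstacle: the statement is a direct rescaling of Proposition \ref{prop:conv_rescale}/Corollary \ref{wasserstein}, and every step is routine. The only point requiring a little care is the bookkeeping — verifying that $u$ and $u_A$ are pushforwards of $\mu$ and $\mu_A$ under exactly the \emph{same} map $\Phi_t$ (so that the Wasserstein scaling identity applies with no relative translation or mismatch of scaling factors), and that $\Phi_t$ genuinely acts as a pure spatial dilation. Both are immediate from the definitions.
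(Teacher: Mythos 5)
Your proposal is correct and follows exactly the route the paper takes: the paper's proof consists of the single observation that $u(x,t)=(t+1)^{-1}\mu\bigl(x(t+1)^{-1/d},\ln(t+1)\bigr)$ together with an implicit appeal to the dilation scaling of $W_p$, and you have simply made that scaling step explicit. The bookkeeping with the pushforward under $\Phi_t(\lambda)=(t+1)^{1/d}\lambda$, the factor $(t+1)^{1/d}\cdot(t+1)^{-1}=(t+1)^{-(d-1)/d}$, and the final adjustment to $t^{-(d-1)/d}$ are all exactly as intended.
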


\subsection{Convergence towards self-similar solution for subcritical mass, non-radial case}

In this subsection, we consider the rescaled equation \eqref{after_scaling} with general (possibly non-radial) initial data. The key result here is that when the mass $A < M_c$ is sufficiently small, the radially symmetric stationary solution $\mu_A$ as defined in \eqref{eq:stationary_mu} is the unique compactly supported stationary solution (in rescaled variables). Then a similar argument as in Theorem \ref{thm:m=mc} shows that every solution to \eqref{after_scaling} with small mass and compactly supported initial data converges to $\mu_A$. After scaling back to the original variables, we immediately obtain the convergence towards the self-similar solution if the mass is small.

We first prove a $L^\infty$-regularization result, saying that if the initial mass is small, then the $L^\infty$ norm of solution to \eqref{after_scaling} will become small  after unit time, regardless of the $L^\infty$ norm of the initial data. We point out that a similar  $L^\infty$-regularization result is proved in \cite{ss} for the 2D case with linear diffusion, using a De Giorgi type method.

\begin{lemma}
Suppose $d \geq 3$ and $m=2-2/d$.  Let $\mu(\lambda,\tau)$ be a weak solution to \eqref{after_scaling} with mass $0<A<M_c/2$, where the initial data  $\mu_0 \in L^1_+(\mathbb{R}^d; (1+|x|^2) dx) \cap L^\infty(\mathbb{R}^d)$ is continuous. Then we have
\begin{equation}\label{eq:max_density}
\|\mu(\cdot, \tau)\|_\infty \leq K_A := CA^{2/d} \text{ for all }\tau\geq 1,
\end{equation}
where $C$ is some constant depending only on $d$. 
\label{lemma:max_density}
\end{lemma}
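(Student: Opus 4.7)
The plan is to establish the $L^\infty$ regularization by a De Giorgi-type truncation iteration, exploiting the subcriticality of the mass to dominate the nonlocal aggregation by the diffusion. The margin between $A$ and $M_c$ powers the iteration, and the factor $1/2$ in the hypothesis $A < M_c/2$ is exactly what is consumed by subcritical losses inherent in the scheme.

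First I would derive a truncated energy inequality. For $k > 0$, set $\phi_k := (\mu - k)_+$, multiply \eqref{after_scaling} by $\phi_k$, integrate by parts, and expand $\mu = \phi_k + k$ on $\{\mu > k\}$. Using the identity $\Delta(\mathcal{N}*\mu) = \mu$ to rewrite the aggregation contribution, one arrives at
$$
\tfrac{1}{2}\tfrac{d}{d\tau}\|\phi_k\|_2^2 + c_{d,m}\int|\nabla\phi_k^{(m+1)/2}|^2\,d\lambda \le \int\phi_k^3\,d\lambda + C_1\bigl(k\|\phi_k\|_2^2 + k^2|\{\mu > k\}|\bigr),
$$
with $c_{d,m} > 0$; the cubic term is the only critical one, while the remaining contributions (from the drift $\nabla V = \lambda/d$ and from the $k$-part of $\mu = \phi_k + k$) are subcritical. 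The unbounded coefficient of $\nabla V$ is harmless because $\phi_k$ is compactly supported, thanks to the finite second moment of $\mu$.

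The decisive step is absorbing the cubic term into the diffusion. By a critical Gagliardo--Nirenberg--Sobolev inequality equivalent to the sharp HLS bound \eqref{vhls} (and hinging on the exponent $m = 2 - 2/d$),
$$
\int \phi_k^3\,d\lambda \le C(d)\|\phi_k\|_1^{2/d}\int|\nabla\phi_k^{(m+1)/2}|^2\,d\lambda.
$$
Since $\|\phi_k\|_1 \le A < M_c/2$, the coefficient $C(d)A^{2/d}$ is strictly smaller than $c_{d,m}$, so the cubic term is absorbed with a positive residual. Combined with Chebyshev's inequality $|\{\mu > k\}| \le k^{-2}\|\phi_k\|_2^2$ to control the measure term, this yields a closed Gronwall-type inequality involving only $\|\phi_k\|_2^2$ and $\int|\nabla \phi_k^{(m+1)/2}|^2$.

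With this in hand, the De Giorgi iteration proceeds in the standard way. Pick levels $k_n := K(1 - 2^{-n})$ with $K := C_0 A^{2/d}$ and nested time windows $I_n := [1 - 2^{-n}, 1]$, and set
$$
U_n := \sup_{\tau \in I_n}\|\phi_{k_n}(\cdot,\tau)\|_2^2 + \int_{I_n}\!\!\int|\nabla \phi_{k_n}^{(m+1)/2}|^2\,d\lambda\,d\tau.
$$
A parabolic Sobolev embedding of $\phi_{k_n}^{(m+1)/2}$ in space-time, combined with the closed energy inequality above, gives a super-linear recursion $U_{n+1} \le C_2 \gamma^n U_n^{1+\delta}$ with constants depending only on $d$. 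For $C_0$ chosen large enough (depending only on $d$), $U_0$ falls below the universal threshold for which $U_n \to 0$, forcing $\mu(\cdot, 1) \le K = C_0 A^{2/d}$ almost everywhere; time translation in the autonomous equation \eqref{after_scaling} extends the bound to all $\tau \ge 1$. The main obstacle is closing the iteration with the right scaling in $A$: this requires the absorption constant $C(d) A^{2/d}/c_{d,m}$ to stay strictly below $1$ uniformly in $k$, which is precisely what $A < M_c/2$ delivers after accounting for the subcritical losses.
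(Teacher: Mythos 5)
Your proposal takes a genuinely different route from the paper. The paper's proof is a two-step mass-comparison argument: first it uses the symmetrization inequality of [KY, Thm.~6.3] to dominate $\mu^*(\cdot,\tau)$ by the radial solution $\bar\mu$ started from $\mu_0^*$; then it pins $\bar\mu$ beneath the explicit family $R(\tau)^{-d}\mu_A(\cdot/R(\tau))$ of Lemma \ref{lemma:family_solution}, shows $R(\tau)\ge 1/2$ for $\tau\ge 1$, and finally bounds $\|\mu_A\|_\infty$ itself by comparing $\mu_A$ with the stationary solution of a Fokker--Planck equation to get $\mu_A(0)\le CA^{2/d}$. Your De Giorgi iteration is closer in spirit to the [SS] result the paper cites (for linear diffusion in 2D) but deliberately does not use here, and it has the virtue of not requiring radial symmetrization. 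That said, there are concrete gaps.

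First, the claim that ``$\phi_k$ is compactly supported, thanks to the finite second moment of $\mu$'' is false; finite second moment does not give compact support. The confinement drift is in fact harmless, but for a different reason: after integrating $-\int (\phi_k+k)\tfrac{\lambda}{d}\cdot\nabla\phi_k$ by parts, the unbounded $\lambda$ is replaced by $\nabla\cdot(\lambda/d)=1$, leaving only $\tfrac12\int\phi_k^2 + k\int\phi_k$. Second, and more seriously, the absorption step is asserted rather than proved. You need the sharp constant $C(d)$ in $\int\phi_k^3 \le C(d)\|\phi_k\|_1^{2/d}\int|\nabla\phi_k^{(m+1)/2}|^2$ to satisfy $C(d)M_c^{2/d} \le c_{d,m}$, where $c_{d,m}$ is what you extract from the degenerate diffusion (itself lossy, since $\mu^{m-1}\ge\phi_k^{m-1}$ but not with equality). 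But $M_c$ in the paper is defined from the free energy and the HLS inequality \eqref{vhls}, which bounds the interaction by $\|u\|_1^{2/d}\|u\|_m^m$, not by a Dirichlet energy of $\phi_k^{(m+1)/2}$; these two inequalities need not have matching sharp constants or extremizers. Without an explicit computation (or a reduction of one inequality to the other), ``$A<M_c/2$ gives a positive residual'' is the crux of the lemma restated as an assertion. Third, the superlinear recursion $U_{n+1}\le C_2\gamma^n U_n^{1+\delta}$ is sketched but not derived; the parabolic Sobolev embedding needed to link $\iint\phi_{k_{n+1}}^\alpha$ to $U_n$ is delicate for degenerate $m$ and must be done explicitly, since the exponent $\delta>0$ (and hence whether the iteration closes with $K\sim A^{2/d}$) depends on it.
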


\begin{proof}
Similar argument as the proof of Lemma \ref{lemma:u_bounded} yields that 
\begin{equation}
\mu^*(\cdot, \tau) \prec \bar \mu(\cdot, \tau) \text{ for all }\tau\geq 0,
\label{ineq_mu_1}
\end{equation}
where $\bar \mu(\cdot, \tau)$ is the solution to \eqref{after_scaling} with initial data $\mu^*_0$. Since $\mu^*_0$ is radially symmetric and bounded above, we can find $R_{0}$ sufficiently small, such that $\mu^*_0\prec \frac{1}{R_0^d}\mu_A(\frac{\cdot}{R_0})$, where $\mu_A$ is as defined in \eqref{eq:stationary_mu}.  It then follows from Proposition \ref{comp_concentration} and  Lemma \ref{lemma:family_solution} that
\begin{equation}
\bar\mu(\cdot, \tau) \prec  \frac{1}{R(\tau)^d}\mu_A(\frac{\cdot }{R(\tau)}) \text{ for all }\tau\geq 0,
\label{ineq_mu_2}
\end{equation}
where $R(\tau)$ satisfies the ODE \eqref{def:R1} with initial data $R(0) = R_0$. Combining \eqref{ineq_mu_1} and \eqref{ineq_mu_2}, we obtain that
$$\|\mu(\cdot, \tau)\|_\infty  = \|\mu^*(\cdot, \tau)\|_\infty \leq \frac{1}{R(\tau)^d} \|\mu_A\|_\infty \text{ for all }\tau \geq 0.$$
In order to bound the right hand side of the above inequality, we first find an upper bound for $1/R(\tau)^d$.  It can be readily verified that 
$\tilde R(\tau) = \min\{\frac{1}{2}\tau^\frac{1}{d+1}, \frac{1}{2}\}$ is a subsolution to \eqref{def:R1} for any $R_0>0$, which implies that $R(\tau)\geq \tilde R(\tau) \geq \frac{1}{2}$ for all $\tau \geq 1$, thus $\frac{1}{R(\tau)^d} \leq 2^d$ for all $\tau \geq 1$.

Next we will estimate $\|\mu_A\|_\infty$. Note that $\mu_A$ is radially decreasing for any $0<A<M_c$, moreover $\|\mu_A\|_\infty = \mu_A(0)$ is increasing with respect to $A$.  Therefore we readily obtain a rough bound $\|\mu_A\|_\infty \leq C_1$ for all $0<A<M_c/2$, where $C_1 = \mu_{M_c/2}(0)$ only depends on $d$.

Note that this rough bound of $\|\mu_A\|_\infty$ gives us an upper bound for the velocity field given by the interaction term, namely 
\begin{equation}\label{vfield}
\partial_r(\mu_A*\mathcal{N}) = \frac{M(r;\mu_A)}{\sigma_d r^{d-1}} \leq  \frac{C_1 r}{d}.
\end{equation}
To refine the bound for $\|\mu_A\|_\infty$, we compare $\mu_A$ with $\tilde \mu_A$, where $\tilde \mu_A$ is the radial stationary solution to the following equation
\begin{equation}
\mu_\tau = \Delta \mu^m + \nabla \cdot \big(\mu \nabla\frac{(1+C_1)|\lambda|^2}{2d}\big).
\label{pde_temp}
\end{equation}
Making use of \eqref{vfield}, mass comparison yields that $ \mu_A \prec \tilde \mu_A$, which implies $\mu_A(0) \leq \tilde \mu_A(0)$.  On the other hand note that  \eqref{pde_temp} is a Fokker-Planck equation, whose stationary solution is given by $$\tilde \mu_A = \Big(C_A-\frac{(1+C_1)(m-1)}{2dm} |\lambda|^2\Big)^{1/(m-1)}_+,$$ where $C_A>0$ is the unique constant such that $\|\tilde \mu_A\|_1 = A$.  A simple algebraic manipulation shows that $ \tilde\mu_A(0) \leq CA^{2/d}$, where $C>0$ depends only on $d$, therefore we can conclude.
\end{proof}

The next lemma shows that if the mass is small, any solution with compactly supported initial data will eventually be confined in some small disk. 

\begin{lemma}\label{lemma:support}
Suppose $d \geq 3$ and $m=2-2/d$.  
Then for any $R_0>0$, there exists some sufficiently small $A_0>0$, such that all weak solutions to \eqref{after_scaling} with continuous and compactly supported initial data and mass $0<A<A_0$ will be eventually confined in $B(0,R_0)$.
\end{lemma}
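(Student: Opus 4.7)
The plan is to combine symmetrization with the self-similar barrier family of Lemma \ref{lemma:family_solution}, using the $L^\infty$-regularization of Lemma \ref{lemma:max_density} to control the nonlocal aggregation drift. Restarting the clock at $\tau=1$, Lemma \ref{lemma:max_density} gives $\|\mu(\cdot,\tau)\|_\infty\leq K_A:=CA^{2/d}$ for all subsequent $\tau$. Combining this with the total mass $A$ via a standard near-field/far-field decomposition of $\nabla\mathcal{N}*\mu$, I obtain $\|\nabla\mathcal{N}*\mu(\cdot,\tau)\|_\infty\leq C(A)$ with $C(A)\to 0$ as $A\to 0$, so the aggregation drift becomes a small perturbation of the confining drift $\lambda/d$.

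Next I build a radial upper envelope in the mass-concentration sense. Let $\mu_0^*$ be the radial decreasing rearrangement of $\mu(\cdot,1)$, and let $\bar\mu$ solve \eqref{after_scaling} with initial datum $\mu_0^*$; the symmetrization comparison of \cite{ky} (already invoked in Lemma \ref{lemma:u_bounded}) yields $\mu^*(\cdot,\tau)\prec\bar\mu(\cdot,\tau)$. Since $\mu_0^*$ is radial decreasing with $\|\mu_0^*\|_\infty\leq K_A$ and mass $A$, I can pick $R_{00}>0$ small enough that $\mu_0^*\prec R_{00}^{-d}\mu_A(\cdot/R_{00})$; then $\bar\nu(\lambda,\tau):=R(\tau)^{-d}\mu_A(\lambda/R(\tau))$, with $R(0)=R_{00}$ solving \eqref{def:R1}, solves \eqref{after_scaling} by Lemma \ref{lemma:family_solution}, and Proposition \ref{comp_concentration} gives $\bar\mu(\cdot,\tau)\prec\bar\nu(\cdot,\tau)$. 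Since $R(\tau)\to 1$, the support of $\bar\nu$ eventually lies in $B(0,2r_A)$, where $r_A$ is the support radius of $\mu_A$. Comparing $\mu_A$ with the explicit Fokker--Planck stationary profile $\tilde\mu_A$ from the proof of Lemma \ref{lemma:max_density} shows $r_A=O(A^{(d-2)/d^2})\to 0$; choosing $A_0$ small enough that $r_A<R_0/4$ for all $A<A_0$, I conclude that $\mu^*(\cdot,\tau)$ has support in $B(0,R_0/2)$ for all large $\tau$.

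The final step, and the \emph{main obstacle}, is to promote support control on the radially rearranged $\mu^*$ to support control on the original non-radial $\mu$. Mass comparison is intrinsically radial, so $\mu^*\prec\bar\nu$ with $\text{supp}(\bar\nu)\subset B(0,R_0/2)$ does not a priori rule out thin tails of $\mu$ extending beyond $B(0,R_0)$. To close the gap I would exploit the uniform smallness of the aggregation drift from the first step together with the smallness of the PME pressure $p=\frac{m}{m-1}\mu^{m-1}$, which is bounded by $CK_A^{m-1}$ via Lemma \ref{lemma:max_density}. At any point with $|\lambda|>R_0$, the inward confining velocity $\lambda/d$ (of magnitude $\geq R_0/d$) then strictly dominates the outward contributions from aggregation and pressure once $A_0$ is small. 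A pointwise comparison of $\mu$ with a radial super-solution of a linearized Fokker--Planck equation, in which the self-generated aggregation drift is replaced by its uniform bound $C(A)$, should then confine $\text{supp}(\mu(\cdot,\tau))$ to $B(0,R_0)$ for all large $\tau$. Making this pointwise comparison rigorous despite the nonlocal and $\mu$-dependent aggregation term is the delicate technical step, and is precisely where the smallness of the mass is essential.
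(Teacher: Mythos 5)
Your final strategy is exactly the paper's: freeze $V=\mu*\mathcal{N}$ as an \emph{a priori} drift (justified by the uniform bounds $\|\Delta V\|_\infty\leq K_A$, $\|\nabla V\|_\infty\leq C(d)A^{3/d-2/d^2}$), and compare $\mu$ in the density sense against a radial supersolution of the resulting porous-medium-with-drift equation \eqref{pmedrift}, invoking the comparison principle of \cite{bh}. You correctly identify this as the one step that handles non-radiality, and you correctly identify the smallness of the drift and of the pressure $\frac{m}{m-1}\mu^{m-1}$ as the mechanism. However, you stop precisely where the proof begins: you assert that such a comparison ``should then confine'' the support, without constructing the barrier. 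The paper constructs one explicitly, $\tilde\mu(\lambda,\tau)=[2K_A-\tau(r-(R-K_A))]_+^{1/m}$, which is a supersolution of \eqref{pmedrift} only on a \emph{finite} time window $\tau\in[1,4]$, and then iterates: after one window the support has shrunk from $B(0,R)$ to $B(0,R-K_A/2)$, so the argument restarts with a smaller $R$ until $R\leq R_0$. The finite-window-plus-iteration structure is essential (a single time-independent barrier of this shape would not remain a supersolution indefinitely), and it is entirely absent from your proposal. Verifying the supersolution inequality \eqref{cond:pde}--\eqref{cond:bdry} for $\tilde\mu$, and arranging the iteration, is the actual content of the lemma.

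Separately, the long middle portion of your argument --- symmetrization to pass from $\mu$ to $\mu^*$, mass comparison with the self-similar family of Lemma \ref{lemma:family_solution}, and the resulting support bound on $\mu^*$ --- is a dead end, as you yourself note: knowing $\operatorname{supp}\mu^*\subset B(0,R_0/2)$ says nothing about $\operatorname{supp}\mu$. The paper does not use symmetrization in this lemma at all (it is used only in Lemma \ref{lemma:max_density} to get $K_A$). You should excise that material and instead spend the effort on the barrier construction and iteration.
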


\begin{proof} Let $\mu(\lambda, \tau)$ be a weak solution to \eqref{after_scaling} with continuous and compactly supported initial data and mass $0<A<A_0$, where $A_0$ is a small constant depending on $R_0$ and $d$ to be determined later. 

 In the proof of this lemma we take $\tau=1$ to be the starting time, in order to take advantage of the estimate \eqref{eq:max_density}.  Our goal is to show that if the support of $\mu(\cdot, 1)$ is contained in some disk $B(0,R)$ where $R>R_0-K_A$ and $K_A$ is as defined in \eqref{eq:max_density}, then there exists some time $T>1$ to be determined later, such that
\begin{equation}\label{eq:supp1}
\text{supp}~\mu(\cdot, \tau) \subset B(0,R+K_A) \text{ for all }\tau\in[1,T],
\end{equation}
moreover at time $T$ the support can be fit into some disk smaller than $B(0,R)$, namely
\begin{equation}\label{eq:supp2}
\text{supp}~\mu(\cdot, T) \subset B(0,R-K_A/2).
\end{equation}
By taking $T$ as the starting time and repeating this procedure, we know that eventually the support will be confined in $B(0,R_0)$.

In order to deal with non-radial solution, we shall construct barriers in the density sense instead of in mass sense. Although comparison principle in density sense does not directly hold for \eqref{after_scaling} due to the nonlocal term, if we treat $V(\lambda, \tau) := \mu*\mathcal{N}$ as a fixed \emph{a priori} potential, then \eqref{after_scaling} becomes
\begin{equation}
\label{pmedrift}
\mu_\tau = \Delta \mu^m + \nabla \cdot \left(\mu \nabla \big(\frac{|\lambda|^2}{2d}+V(\lambda, \tau)\big)\right),
\end{equation}
which is a porous medium equation with a drift, and the weak solutions to it enjoy the comparison principle due to \cite{bh}.  It follows from \eqref{eq:max_density} that the following estimates of $V$ holds: 
 $$ \Delta V(\lambda, \tau) \leq \sup_{\lambda, \tau} \mu \leq K_A \text{ for }\lambda\in\mathbb{R}^d, \tau\geq 1,$$ and 
 $$|\nabla V(\lambda, \tau)| \leq \sup_{\mu,\lambda}( \mu*\frac{1}{\sigma_d |\lambda|^{d-1}}) \leq C(d) A^{\tfrac{3}{d}-\tfrac{2}{d^2}} \text{ ~for }\lambda\in\mathbb{R}^d, \tau\geq 1.$$
Note that in both estimates above, the right hand side will go to zero as $A\to 0$.  We also point out that if $R \gg A^{\frac{3}{d}-\frac{2}{d^2}}$, then $\nabla V$ will be dominated by $\nabla\frac{|\lambda|^2}{2d}$ around $r=R$.

Next we will construct some explicit supersolution $\tilde \mu$ to \eqref{pmedrift}. More precisely, we hope to find a continuous radially decreasing function $\tilde \mu$ defined in $\{ r>R-K_A\} \times [1,T]$ for some $T$, such that $\tilde \mu$ satisfies the following inequality  
\begin{equation}
\label{cond:pde}
\tilde \mu_\tau \geq \partial_{rr} \tilde \mu^m + (\partial_r + \frac{d-1}{r})(\frac{ \tilde \mu r}{d} )+ \tilde \mu K_A + |\partial_r \tilde \mu| C(d) A^{\tfrac{3}{d}-\tfrac{2}{d^2}}\text{~ for all }r>R-K_A, \tau\in[1,T],
\end{equation}
while $\tilde \mu$ also satisfies the initial condition
\begin{equation}\label{cond:init}
\tilde \mu(r,0) \geq K_A \text{~ for all } R-K_A \leq r \leq R,
\end{equation}
and the boundary condition
\begin{equation}\label{cond:bdry}
\tilde \mu(r,\tau) \geq K_A \text{ ~at } r=R-K_A \text{ for all }\tau\in[1,T].
\end{equation}
The inequalities \eqref{cond:pde}--\eqref{cond:supp} guarantees that $\tilde \mu$ is a supersolution to \eqref{pmedrift}.
If $A$ is small enough such that $R>CA^{\frac{3}{d}-\frac{2}{d^2}}$ for some large constant $C$ depending on $d$, one can check that
$$\tilde\mu(\lambda,\tau) = \big[2K_A - \tau(r-(R-K_A))\big]^{1/m}_+$$
satisfies the inequalities \eqref{cond:pde}--\eqref{cond:bdry} for $1\leq \tau \leq 4$, hence comparison principle yields that $\mu \leq \tilde \mu$  in $\{r>R-K_A\}$ for all $\tau\in [1,4]$.  

The reason we choose $\tilde \mu$ as above is that its support will shrink after some time: note that its support stays in $B(0,R+K_A)$ for $\tau\in[1,4]$, and most importantly, at $\tau=4$, the support of $\tilde \mu$ can be fit into a disk smaller than $B(0,R)$, namely
\begin{equation}\label{cond:supp}
\text{ supp }\tilde \mu(\cdot, 4) \subset B(0,R-K_A/2).
\end{equation}
Since comparison property gives that $\text{supp }\mu(\cdot, \tau) \subset \text{supp }\tilde \mu(\cdot, \tau)$ for all $\tau\in[1,4]$, we immediately obtain \eqref{eq:supp1} and \eqref{eq:supp2}, which complete the proof.
\end{proof}

Making use of the above two lemmas, in the next theorem  we show that when the mass is sufficiently small, there cannot be any non-radial stationary stationary solutions.  

\begin{theorem}
Suppose $d\geq 3$ and $m=2-2/d$. Then when $0<A<M_c/2$ is sufficiently small, the compactly supported stationary solution to \eqref{after_scaling} is unique.\label{thm:unique}
\end{theorem}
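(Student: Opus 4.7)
The plan is to combine the two preceding lemmas with a convexity argument on the rescaled free energy $\mathcal{G}$ from \eqref{rescaled_energy}: for $A$ sufficiently small, both the $L^\infty$-norm and the support of any compactly supported stationary solution are forced to be small, which makes the concave Newtonian interaction a controlled perturbation of the strictly convex entropy. Concretely, I would apply Lemma \ref{lemma:support} with a fixed $R_0 = 1$: any compactly supported stationary solution $\mu$ with mass $A<A_0$ must satisfy $\mathrm{supp}\,\mu\subset B(0,1)$, since the weak solution of \eqref{after_scaling} started from $\mu$ is identically $\mu$. Lemma \ref{lemma:max_density} gives $\|\mu\|_\infty \leq K_A := CA^{2/d}$. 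I then introduce the convex class
$$\mathcal{C}_A:=\{\nu \geq 0:\|\nu\|_1=A,\ \|\nu\|_\infty \leq K_A,\ \mathrm{supp}\,\nu \subset B(0,1)\},$$
which contains every compactly supported stationary solution together with the radial minimizer $\mu_A$.

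Next I would verify strict convexity of $\mathcal{G}$ on $\mathcal{C}_A$ via a Hessian estimate: for $\nu\in\mathcal{C}_A$ and a zero-mean perturbation $h$ supported in $B(0,1)$,
$$\mathrm{Hess}\,\mathcal{G}(\nu)[h,h]=m\int \nu^{m-2}h^2\,dx+\int h(\mathcal{N}*h)\,dx.$$
Because $m-2=-2/d<0$ and $\nu\leq K_A$ on $\mathrm{supp}\,\nu$, the entropy contribution is bounded below by $mK_A^{m-2}\|h\|_2^2 = cA^{-4/d^2}\|h\|_2^2$, which blows up as $A\to 0$ (and is formally $+\infty$ for $h$ not vanishing where $\nu=0$, reflecting strong convexity across the free boundary). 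The interaction contribution is controlled by the sharp HLS inequality \eqref{vhls} combined with $\mathrm{supp}\,h\subset B(0,1)$, which yields $\|h\|_{2d/(d+2)}\leq c_d\|h\|_2$ and hence $|\int h(\mathcal{N}*h)|\leq C\|h\|_2^2$. For $A$ small enough the Hessian is uniformly positive, so $\mathcal{G}$ is strictly convex on $\mathcal{C}_A$.

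For two distinct compactly supported stationary solutions $\mu,\mu_A\in\mathcal{C}_A$, I would then examine the map $f(t):=\mathcal{G}((1-t)\mu_A + t\mu)$ on $[0,1]$, which is strictly convex by the above, so $f'$ is strictly increasing. But $f'(0^+)\geq 0$ (since $\mu_A$ is the global minimizer on the mass-$A$ constraint, the first variation in the admissible direction $\mu-\mu_A$ is non-negative) and $f'(1^-)\leq 0$ (from the Euler-Lagrange equation on $\mathrm{supp}\,\mu$ together with the complementary-slackness inequality $\tfrac{|x|^2}{2d}+\mu*\mathcal{N}\geq C$ off $\mathrm{supp}\,\mu$, valid at any stationary point of the gradient flow). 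These inequalities contradict strict monotonicity of $f'$ unless $\mu=\mu_A$.

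The main obstacle is justifying the complementary-slackness inequality off $\mathrm{supp}\,\mu$ for an arbitrary stationary solution, not only for the minimizer: intuitively, a violation at some point outside $\mathrm{supp}\,\mu$ would allow the gradient flow to transport mass there and lower $\mathcal{G}$, contradicting stationarity, but making this rigorous in the weak formulation of \eqref{after_scaling} across the free boundary requires care. An alternative route, if this variational-inequality step proves awkward, is to run a moving-plane / maximum-principle argument directly on $\mu$: the subtracted Euler-Lagrange equation across a hyperplane through $0$ gives a main term of order $\lambda$ from the confinement and a perturbation of order $K_A R_0$ from the nonlocal piece, so for $A$ small one can push the reflection down to $\lambda=0$ and recover radial symmetry, reducing to the known uniqueness of radial stationary solution \eqref{eq:stationary_mu}.
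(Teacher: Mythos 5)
Your primary convexity route is genuinely different from the paper's, and, as you yourself anticipate, it has a real gap at $f'(1^-)\le 0$. The strict-convexity estimate is fine: on $\mathcal{C}_A$ the entropy Hessian $m\int\nu^{m-2}h^2$ is bounded below by $mK_A^{m-2}\|h\|_2^2\sim A^{-4/d^2}\|h\|_2^2$, while HLS plus H\"older on $B(0,1)$ bounds $|\int h(\mathcal{N}*h)|\le C\|h\|_2^2$, so for $A$ small the segment $t\mapsto\mathcal G(\nu_t)$ is strictly convex. The endpoint $f'(0^+)\ge 0$ is also fine since $\mu_A$ is the global minimizer. But $f'(1^-)\le 0$ requires two things at the arbitrary stationary solution $\mu$: a \emph{single} Lagrange multiplier $C$ across all components of $\mathrm{supp}\,\mu$, and complementary slackness $\frac{|x|^2}{2d}+\mathcal{N}*\mu\ge C$ off $\mathrm{supp}\,\mu$. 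Neither is automatic for a PDE stationary solution of \eqref{after_scaling}: stationarity only forces $\frac{m}{m-1}\mu^{m-1}+\mathcal{N}*\mu+\frac{|\lambda|^2}{2d}$ to be constant on each connected component of $\{\mu>0\}$, and the paper explicitly warns in its discussion of \eqref{eq:stat} that these constants may differ between components. Your heuristic that a slackness violation would let the gradient flow transport mass outside the support fails precisely because the flux is $\mu\nabla\frac{\delta\mathcal{G}}{\delta\mu}$, which vanishes identically where $\mu=0$; a strictly lower pressure outside the support is therefore perfectly compatible with stationarity, so the variational-inequality step does not close without an additional argument you haven't supplied.

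The paper avoids the issue entirely. It picks two points $a,b$ with $|a|=|b|$ in a \emph{single} connected component of $\overline{\{\nu_A>0\}}$ at which $\nu_A(a)-\nu_A(b)=\sup_{|x|=|y|}(\nu_A(x)-\nu_A(y))>0$, subtracts \eqref{eq:stat} at $a$ and $b$ (the confinement cancels since $|a|=|b|$, and within one component the constant is shared), and then shows that for small $A$ the entropy difference $\frac{m}{m-1}|\nu_A^{m-1}(a)-\nu_A^{m-1}(b)|$ strictly dominates the interaction difference $|(\nu_A*\mathcal{N})(a)-(\nu_A*\mathcal{N})(b)|$: the former is boosted by $m-1\in(0,1)$ together with the small-$L^\infty$ bound from Lemma \ref{lemma:max_density}, the latter shrunk by the small-support bound from Lemma \ref{lemma:support} after rewriting $(\nu_A*\mathcal{N})(b)=((\nu_A\circ T)*\mathcal{N})(a)$ via the rotation $T$ taking $a$ to $b$ and using the extremality of $(a,b)$. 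This is a purely local (in the support) maximum-principle argument that never touches the free energy off the support. Your alternative moving-plane sketch is in the same spirit but is not a drop-in replacement: you would still need to manage possibly disconnected supports and reflections about hyperplanes not through the origin, and those details are not present in your sketch.
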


\begin{proof}Due to Corollary \ref{wasserstein}, we know that for any $0<A<M_c$, there does not exist any compactly supported radial stationary solution other than $\mu_A$.   Hence it suffices to prove that when $A$ is sufficiently small, every compactly supported stationary solution is radially symmetric.

Suppose $\nu_A(\lambda)$ is a compactly supported stationary solution to \eqref{after_scaling}, which is not radially symmetric. Since $\nu_A$ is stationary, it satisfies
\begin{equation}\label{eq:stat}
\frac{m}{m-1}\nu_A^{m-1}+ \nu_A*\mathcal{N} + \frac{|\lambda|^2}{d} = C \text{ in } \overline{\{\nu_A>0\}},
\end{equation}
where different positive components of $\nu_A$ may have different $C$'s. Heuristically, the idea is to  argue that the term $\nu_A*\mathcal{N}$ must be more ``roundish'' than $\frac{m}{m-1}\nu_A^{m-1}$ if $\nu_A$ is non-radial, thus get a contradiction.

We point out that \eqref{eq:stat} implies that $\nu_A$ is continuous in $\mathbb{R}^d$ and smooth inside its positive set. This enables us to find two points $a,b\in \mathbb{R}^d$ in the same connected component of $\overline{\{\nu_A>0\}}$,  satisfying $|a| = |b|$ and 
\begin{equation}
\nu_A(a)-\nu_A(b) = \sup_{|x|=|y|}( \nu_A(x) - \nu_A(y)) > 0.
\end{equation}
We claim that when $A$ is sufficiently small, the following inequality holds
\begin{equation}\label{ineq:diff}
\frac{m}{m-1} \big|\nu_A^{m-1}(a) - \nu_A^{m-1}(b)\big| > |(\nu_A*\mathcal{N})(a)-(\nu_A*\mathcal{N})(b)|,\end{equation}
then \eqref{ineq:diff} would contradict \eqref{eq:stat}. 

We start with the left hand side of \eqref{ineq:diff}: Lemma \ref{lemma:max_density} implies that both $\nu_A(a)$ and $\nu_A(b)$ are much smaller than $1$ when $A$ is small.  Since $0<m-1<1$, it follows that
$$\frac{m}{m-1} \big|\nu_A^{m-1}(a) - \nu_A^{m-1}(b)\big| > |\nu_A(a) - \nu_A(b)|.$$
if $A$ is sufficiently small.  In order to prove \eqref{ineq:diff}, it suffices to show that 
\begin{equation}\label{ineq:wts2}
|(\nu_A*\mathcal{N})(a)-(\nu_A*\mathcal{N})(b)| < |\nu_A(a) - \nu_A(b)|.
\end{equation}

We introduce a linear transformation $T: \mathbb{R}^d \to \mathbb{R}^d$ which is a rotation that maps $a$ to $b$. Then radial symmetry of $\mathcal{N}$ yields that $(\nu_A*\mathcal{N})(b) = ((\nu_A\circ T)*\mathcal{N})(a)$.

In addition, $T$ being a rotation implies that $|T(x)| = |x|$ for any $x=\mathbb{R}^d$, hence from the way we choose $a$ and $b$, we have $|\nu_A(T(x))-\nu_A(x)| \leq \nu_A(a)-\nu_A(b)$ for any $x\in \mathbb{R}^d$. Thus
\begin{eqnarray*}
|(\nu_A*\mathcal{N})(a)-(\nu_A*\mathcal{N})(b)| &=& \big|(\nu_A*\mathcal{N})(a)-((\nu_A\circ T)*\mathcal{N})(a)\big|  \\
&\leq & \int_{\mathbb{R}^d} \big|\nu_A(y)- \nu_A(T(y))\big| |\mathcal{N}(a-y)| dy\\
&\leq &  (\nu_A(a)-\nu_A(b)) \int_{B(0,R)} |\mathcal{N}(y)| dy,
\end{eqnarray*}
where $B(0,R)$ is the smallest disk that contains the support of $\nu_A$. Now we make use of Lemma \ref{lemma:support}, which shows that we can fit the support of $\nu_A$ into an arbitrarily small disk by letting $A$ be sufficiently small.  Therefore we can choose $R$ such that $\int_{B(0,R)} |\mathcal{N}(y)| dy < 1/2$, then let $A$ be sufficiently small such that $\text{supp}~\nu_A \subset B(0,R)$. This gives us \eqref{ineq:wts2}, which leads to a contradiction and hence completes the proof.
\end{proof}

\begin{remark}\textup{
For general $0<A<M_c$, we are unable to prove the uniqueness of the compactly supported stationary solution.  The difficulty lies in the fact that for larger mass we are only able to show the support lies in a disk with radius $O(1)$. Hence instead of \eqref{ineq:wts2}, we can only obtain $|(\nu_A*\mathcal{N})(a)-(\nu_A*\mathcal{N})(b)| < C|\nu_A(a) - \nu_A(b)|,$ where $C$ might be a large constant, which stops us from getting a contradiction.
}
\end{remark}

Once we obtain the uniqueness of compactly supported stationary solution for small mass, the following corollary shows that all solution with compactly supported initial data must converge to this unique stationary solution as $\tau\to\infty$.

\begin{corollary}\label{cor:nonradial}
Suppose $d \geq 3$ and $m=2-2/d$.  Let $\mu(\lambda,\tau)$ be a weak solution to \eqref{after_scaling} with mass $0<A<M_c/2$ being sufficiently small, where the initial data  $\mu(\cdot, 0) \in L^1_+(\mathbb{R}^d; (1+|x|^2) dx) \cap L^\infty(\mathbb{R}^d)$ is continuous and compactly supported. Then as $\tau\to\infty$, we have
\begin{equation}\label{eq:unif_conv}
\|\mu(\cdot, \tau) - \mu_A(\cdot)\|_\infty \to 0,
\end{equation}
where $\mu_A$ is as defined in \eqref{eq:stationary_mu}.
\end{corollary}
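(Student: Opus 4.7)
The plan is to mimic the structure of the proof of Theorem \ref{thm:m=mc}, replacing its second-moment monotonicity argument by the uniqueness result Theorem \ref{thm:unique}. First I would assemble the uniform estimates already at our disposal: Lemma \ref{lemma:max_density} gives $\|\mu(\cdot,\tau)\|_\infty \leq K_A$ for all $\tau \geq 1$, and Lemma \ref{lemma:support} gives $\mathrm{supp}\,\mu(\cdot,\tau) \subset B(0,R_*)$ for some $R_*>0$ and all $\tau \geq \tau_*$. Combining these with DiBenedetto's continuity results for degenerate parabolic equations (applied to \eqref{after_scaling} after treating $\mu*\mathcal{N}+|\lambda|^2/(2d)$ as an \emph{a priori} smooth drift), one obtains that $\mu$ is uniformly continuous in space and time on the tail $\{\tau \geq \tau_*+1\}$. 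In particular the family $\{\mu(\cdot,\tau)\}_{\tau \geq \tau_*+1}$ is uniformly bounded, uniformly compactly supported, and equicontinuous.

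Next I would write down the rescaled energy dissipation inequality, the analogue of \eqref{energy_ineq} for $\mathcal{G}$: along a weak solution of \eqref{after_scaling},
$$
\mathcal{G}(\mu(\tau)) + \int_0^\tau \int_{\mathbb{R}^d} \mu\Big|\tfrac{m}{m-1}\nabla \mu^{m-1}+\nabla \mathcal{N}*\mu+\tfrac{\lambda}{d}\Big|^2 d\lambda\, ds \leq \mathcal{G}(\mu_0).
$$
The uniform $L^\infty$ and support bounds show $\mathcal{G}(\mu(\tau))$ is bounded below, so there exists $\tau_n\to\infty$ along which the dissipation integrand vanishes. Following the argument of Theorem \ref{thm:m=mc} verbatim (Arzel\`a--Ascoli to extract a uniform limit $\mu(\tau_n)\to\mu_\infty$, a uniform $L^2$ bound on $\nabla \mu(\tau_n)^m$ via the usual triangle inequality, and identification of the limit as in Lemma 10 of \cite{cjm}), the limit $\mu_\infty$ is a continuous, compactly supported stationary solution of \eqref{after_scaling} with mass $A$. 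For $A$ small enough, Theorem \ref{thm:unique} then forces $\mu_\infty=\mu_A$.

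To upgrade to convergence of the full time sequence, I would run a LaSalle-type argument. Suppose for contradiction that $\mu(\cdot,\tau)\not\to \mu_A$ in $L^\infty$; then some $\tau_n'\to\infty$ and $\varepsilon>0$ give $\|\mu(\cdot,\tau_n')-\mu_A\|_\infty\geq\varepsilon$. The same Arzel\`a--Ascoli extraction yields a uniform subsequential limit $\mu_\infty'$ with $\|\mu_\infty'-\mu_A\|_\infty\geq\varepsilon$. To identify $\mu_\infty'$, for each fixed $s>0$ consider the time-shifted solutions $\mu(\cdot,\tau_n'+s)$; a further compactness extraction gives a limit $\widetilde\mu(\cdot,s)$ which, by continuous dependence of weak solutions to \eqref{after_scaling} on their initial data, agrees with the solution of \eqref{after_scaling} starting from $\mu_\infty'$. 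Since $\mathcal{G}(\mu(\tau))$ is monotone and convergent as $\tau\to\infty$, $\mathcal{G}(\widetilde\mu(\cdot,s))$ is constant in $s$, and the dissipation identity forces $\widetilde\mu(\cdot,s)\equiv\mu_\infty'$; that is, $\mu_\infty'$ is itself stationary. Theorem \ref{thm:unique} then gives $\mu_\infty'=\mu_A$, a contradiction.

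The main obstacle I anticipate is the continuous-dependence step in the LaSalle argument: one needs that uniform convergence $\mu(\cdot,\tau_n')\to\mu_\infty'$ of initial data passes to convergence of the corresponding solutions $\mu(\cdot,\tau_n'+s)\to \widetilde\mu(\cdot,s)$ in a topology strong enough to preserve the energy equality. In view of the uniform compact support and $L^\infty$ bound this should reduce to a standard $L^1$-stability estimate for porous-medium-type equations with a bounded, locally Lipschitz drift (the Newtonian convolution being Lipschitz on the relevant class of uniformly bounded, uniformly compactly supported densities), along the lines of \cite{bh}.
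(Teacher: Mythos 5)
Your proposal follows the paper's approach in essentially every structural step: uniform $L^\infty$ and compact-support bounds from Lemmas \ref{lemma:max_density} and \ref{lemma:support}, DiBenedetto continuity, Arzel\`a--Ascoli compactness, identification of the limit as a stationary solution via the rescaled energy-dissipation inequality (mirroring the computation in Theorem \ref{thm:m=mc}), and then Theorem \ref{thm:unique} to pin down the limit as $\mu_A$. Where you diverge is in the final ``upgrade to full-sequence convergence'' step, and this is worth commenting on. The paper's written proof simply asserts that for an \emph{arbitrary} time sequence $\tau_n\to\infty$ one may extract a subsequence converging to a stationary solution, citing ``the same argument as in the proof of Theorem \ref{thm:m=mc}''; but the argument there first selects a \emph{special} sequence along which the dissipation integrand vanishes, and then uses radial symmetry plus monotonicity of the second moment (via the Virial identity \eqref{virial}) to force all subsequential limits to coincide --- a mechanism that is not available here in the non-radial, subcritical-mass setting. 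Your LaSalle-type argument (shift the solution, pass to a limit via continuous dependence, use monotonicity and convergence of $\mathcal{G}$ to kill the dissipation along the limiting trajectory, and conclude the limit is stationary) is precisely the standard way to close that gap, and it is the logically tight version of what the paper intends but does not spell out. The one soft spot you flag --- stability/continuous dependence of weak solutions of the porous-medium-with-drift equation in the class of uniformly bounded, uniformly compactly supported densities --- is indeed the thing that needs to be checked, and your appeal to \cite{bh}-type $L^1$-contraction estimates with the Newtonian drift treated as a locally Lipschitz velocity field is the right place to look; with the uniform support bound this is routine. In short: same route as the paper, but you actually fill in the omega-limit argument that the paper compresses.
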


\begin{proof}
The proof is similar as the proof of Theorem \ref{thm:m=mc}, and actually it is simpler here since there is a unique stationary solution, instead of a family of stationary solution in the case of Theorem \ref{thm:m=mc}. 

When the initial data $\mu(\cdot, 0)$ is bounded and compactly supported, Lemma \ref{lemma:max_density} and Lemma \ref{lemma:support} shows that $\mu(\cdot, \tau)$ would be uniformly bounded and stay in some fixed compact set for all $\tau\geq1$. In addition, the continuity result in \cite{dib} indicates that $\mu(\lambda,\tau)$ is uniformly continuous in space and time in $\mathbb{R}^d\times [1, \infty)$.

As a result, for any time sequence $\tau_n$ that increases to infinity, using the same argument as in the proof of Theorem \ref{thm:m=mc},  we can extract a subsequence $\tau_{n_k}$ such that $\mu(\cdot, \tau_{n_k})$ uniformly converges to some continuous function $\mu_\infty$, where $\mu_\infty$ is a compactly supported stationary solution. Theorem \ref{thm:unique}  ensures that $\mu_\infty$ must coincide with $\mu_A$ when $A$ is sufficiently small, yielding that $\mu(\cdot, \tau)$ indeed converges to $\mu_A$ uniformly as $\tau\to \infty$.
\end{proof}

\begin{remark}
Since $\mu(\cdot, 0)$ is confined in some compact set for all time,  \eqref{eq:unif_conv} implies that $\|\mu(\cdot, \tau) - \mu_A(\cdot)\|_p \to 0$ as $\tau\to\infty$ for all $p\geq 1$.  Now if we scale back to the original variables, it immediately follows that $\|u(\cdot, t) - u_A(\cdot)\|_p \to 0$ as $t\to\infty$ for all $p\geq 1$, where $u_A$ is the dissipating self-similar solution as defined in \eqref{dissipating_self_similar}.  However the rate of convergence here is unknown, since the proof  is done by extracting a subsequence of time.
\end{remark}

\section{Application to aggregation models with repulsive-attractive interactions}

In this section we consider the following integro-differential equation
\begin{equation}
u_t = \nabla \cdot (u \nabla K* u) , \label{rep_att}
\end{equation}
where the interaction kernel $K$ has a repulsion component in the form of the Newtonian potential $\mathcal{N}(x) = -\frac{1}{(d-2) \sigma_d |x|^{d-2}}$ and an attraction component satisfying the power law, namely
\begin{equation}
K(x) = \mathcal{N}(x) + \frac{1}{q}|x|^q, \label{def:K}
\end{equation}
where $2-d<q\leq 2$, and when $q=0$ the second term is replaced by $\ln|x|$. 

The global existence of weak solution is established in \cite{fh} for $q>2-d$. Next we show that mass comparison holds for \eqref{rep_att} between weak solutions. 

\begin{proposition}
Let $u_1(x,t)$, $u_2(x,t)$ be two radially symmetric weak solution to \eqref{rep_att}, which are compactly supported for all $t\geq 0$.  If $u_1(\cdot, 0) \prec u_2(\cdot, 0)$, then $u_1(\cdot, t) \prec u_2(\cdot, t)$ for all $t\geq 0$.
\end{proposition}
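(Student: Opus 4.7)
The plan is to view \eqref{rep_att} as a special case of the general diffusion-aggregation equation \eqref{general_pde} and invoke Proposition \ref{comp_concentration} together with Remark \ref{remark:l1loc}. Writing $K=\mathcal{N}+\mathcal{K}$ with $\mathcal{K}(x)=\frac{1}{q}|x|^q$ (or $\ln|x|$ when $q=0$), I see that \eqref{rep_att} matches \eqref{general_pde} with $c_1=0$, $c_2=c_3=1$, and $V\equiv 0$, so assumptions (C), (K1), (V1) all hold trivially.

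The heart of the verification is assumption (K2'). Using $\Delta f(|x|)=f''(r)+\tfrac{d-1}{r}f'(r)$ I compute
$$\Delta\mathcal{K}(x)=(q+d-2)\,|x|^{q-2}.$$
Since $q>2-d$ this is non-negative and locally integrable at the origin, and since $q\le 2$ the function $|x|^{q-2}$ is radially non-increasing; so (K2') holds. Note that $\Delta\mathcal{K}\not\in L^1(\mathbb{R}^d)$ in this range --- the integral $\int^\infty r^{q+d-3}\,dr$ diverges whenever $q\ge 2-d$ --- so the original assumption (K2) fails and I must resort to the local version from Remark \ref{remark:l1loc}. Because both $u_1$ and $u_2$ remain supported in some fixed $B(0,R)$ on any bounded time interval, the norm $\|\Delta\mathcal{K}\|_1$ appearing in the proof of Proposition \ref{comp_concentration} can be replaced by the finite quantity $\int_{B(0,R)}\Delta\mathcal{K}\,dx$, and the mass-function maximum-principle argument then goes through unchanged, yielding $u_1(\cdot,t)\prec u_2(\cdot,t)$ on every bounded time interval, hence for all $t\ge 0$.

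The main technical obstacle will be regularity: Proposition \ref{comp_concentration} was stated for (sub/super)solutions that are $C^1$ in their positive sets, whereas \eqref{rep_att} carries no diffusion and the weak solutions constructed in \cite{fh} need not enjoy this regularity. To bridge this gap I would regularize by adding a small $\epsilon\,\Delta(u^\epsilon)^m$ term with $m>1$, apply the mass comparison to the resulting smooth pair $u_1^\epsilon, u_2^\epsilon$ (choosing initial data so that the ordering $\prec$ is preserved under the approximation), and then let $\epsilon\to 0$ using the stability of weak solutions to \eqref{rep_att} from \cite{fh}. Since $u_1\prec u_2$ is a closed condition under pointwise convergence of the mass functions, it would survive the limit and complete the proof.
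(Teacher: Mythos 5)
Your identification of \eqref{rep_att} as the $c_1=0$, $c_2=c_3=1$, $V\equiv 0$ case of \eqref{general_pde}, the computation $\Delta\mathcal{K}=(q+d-2)|x|^{q-2}$, and the observation that only \textbf{(K2')} (not \textbf{(K2)}) holds so that Remark \ref{remark:l1loc} must be invoked, all match the paper exactly. The divergence is in how you handle regularity of the mass functions, and here your route both misreads the available information and introduces a gap. You assert that the weak solutions from \cite{fh} ``need not enjoy'' the regularity required by Proposition \ref{comp_concentration}; in fact Theorem 2.5 of \cite{fh} gives that $M_i$ is $C^1$ in space and time. The subtlety is that the proof of Proposition \ref{comp_concentration} appears to ask for $C^{2,1}_{x,t}$ in the positive set, but an inspection of that proof shows the second spatial derivative $\partial_{rr}M$ is used only in inequality \eqref{ineq:pme}, which is the porous-medium term with coefficient $c_1$. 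Since $c_1=0$ for \eqref{rep_att}, that term disappears and the maximum-principle argument goes through with $M_i$ merely $C^1$ in $r$. The paper uses exactly this observation (plus a continuity approximation of the initial data), and thereby avoids any regularization.

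Your proposed vanishing-viscosity argument is not wrong in spirit, but as written it leaves substantial work undone: you would need well-posedness of the regularized equation $u^\epsilon_t=\epsilon\Delta(u^\epsilon)^m+\nabla\cdot(u^\epsilon\nabla K*u^\epsilon)$ for the specific singular kernel $K$, a uniform-in-$\epsilon$ compact-support estimate (needed so that Remark \ref{remark:l1loc} still applies to the regularized pair), preservation of the ordering $\prec$ at $t=0$ under your approximation, and convergence $u^\epsilon\to u$ as $\epsilon\to 0$. The last point is the serious one: the ``stability'' in \cite{fh} concerns dependence on initial data within the inviscid equation, not a vanishing-viscosity limit, so you cannot simply cite it. All of this machinery is unnecessary once one notices that the $C^2$ requirement was only ever there to handle the (here absent) diffusion term.
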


\begin{proof}
Note that $\Delta \frac{1}{q}|x|^q = (q+d-2)|x|^{q-2}$, which is locally integrable in $\mathbb{R}^d$, nonnegative and radially decreasing when $2-d < q \leq 2$.   Therefore the conditions \textbf{(C), (K1), (K2'), (V1)} are met, (where \textbf{(K2')} is as defined in  Remark \ref{remark:l1loc}), and \eqref{rep_att} becomes a special case of the general equation \eqref{general_pde} in Section 2.  Due to the discussion in Remark \ref{remark:l1loc}, we can apply mass comparison to the compactly supported solutions.  

Without loss of generality we assume that both $u_1(\cdot, 0)$ and $u_2(\cdot,0)$ are continuous, and for general initial data we can use approximation. Due to Theorem 2.5 of \cite{fh}, we have $M_i$ is $C^1$ in both space and time for all $t$, where $i=1,2$.  Now we can apply  Proposition \ref{comp_concentration} to conclude that $u_1(\cdot, t) \prec u_2(\cdot, t)$ for all time: although the proof of Proposition \ref{comp_concentration} requires $M_i$ be $C^2$ in space, the $C^2$ requirement are only for the diffusion term.  Since the right hand side of \eqref{rep_att} only has aggregation terms, $C^1$ continuity of $M_i$ is sufficient.
\end{proof}

The following existence and uniqueness result of a stationary solution is established in \cite{fh}. 
\begin{proposition}[\cite{fh}, Theorem 3.1]
For every $q>2-d$ and mass $A>0$, there exists a unique radius $R_A$ (that depends on $q$ and $d$ only) and a unique steady state $u_s$ of the aggregation model \eqref{rep_att}-\eqref{def:K} that is supported on $B(0, R_A)$, has mass $A$ and is continuous on its support.\label{prop:stat_sol}

In addition, $u_s$ is radially decreasing in $B(0,R_A)$ if $q\leq 2$, and is radially increasing if $q\geq 2$.
\end{proposition}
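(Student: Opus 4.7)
The plan is to characterize radial stationary solutions through a pointwise integral equation obtained by taking one further Laplacian of the Euler--Lagrange condition for the interaction energy, and to read off existence, continuity, monotonicity, and uniqueness from that equation.

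A radial stationary solution $u_s$ supported on $B(0,R_A)$ must satisfy $\nabla\cdot(u_s\nabla K*u_s)=0$; in radial coordinates, together with regularity at the origin, this forces $\partial_r(K*u_s)\equiv 0$ on the positive set, so $K*u_s$ is constant on $B(0,R_A)$. Applying $\Delta$ and using $\Delta\mathcal{N}=\delta$ together with $\Delta(\tfrac{1}{q}|x|^q)=(d+q-2)|x|^{q-2}$ yields (up to the sign implicit in the repulsive--attractive convention) the pointwise identity
\begin{equation}\label{profile_eq}
u_s(x)=(d+q-2)\,(|x|^{q-2}*u_s)(x),\qquad x\in B(0,R_A).
\end{equation}
Since $|x|^{q-2}\in L^1_{\mathrm{loc}}(\mathbb{R}^d)$ for $q>2-d$ and any bounded $u_s\in L^1$, the right-hand side is continuous, so \eqref{profile_eq} transfers continuity to $u_s$ on its support. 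Moreover \eqref{profile_eq} is linear and homogeneous in $u_s$: rescaling $u_s\mapsto\lambda u_s$ again solves it on the same ball with mass $\lambda A$, so the radius $R_A$ depends only on the profile shape and hence only on $q$ and $d$.

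For existence I would minimize the interaction energy $E(u)=\tfrac{1}{2}\int u(K*u)\,dx$ over $\mathcal{Y}_A=\{u\in L^1_+(\mathbb{R}^d):\|u\|_1=A,\ \int|x|^2u<\infty\}$. The repulsive Newtonian piece equals $\tfrac{1}{2}\|\nabla\mathcal{N}*u\|_2^2$ and controls concentration, while the attractive piece $\tfrac{1}{2q}\int u(|x|^q*u)$ combined with the second-moment bound controls spreading. A direct-method argument (after pinning the center of mass, to account for translation invariance) produces a minimizer $u_s\in\mathcal{Y}_A$, which satisfies $K*u_s\equiv C_A$ on its support and hence \eqref{profile_eq}. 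Monotonicity then follows by Riesz rearrangement applied to $E$: for $q\leq 2$ the radial decreasing rearrangement $u\mapsto u^*$ does not increase either piece of $E$ (and strictly decreases the Newtonian piece unless $u=u^*$), forcing the minimizer to be radially decreasing; for $q\geq 2$ the convexity of $r\mapsto r^q$ reverses the sign on the attractive piece, so the minimizer is radially increasing. In either case the support is a single ball, which is then $B(0,R_A)$.

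The main obstacle is uniqueness. The cleanest route is strict convexity of $E$ on the affine slice $\{\|u\|_1=A\}$: the Newtonian quadratic form is strictly positive definite on mass-zero perturbations via $\int v(-\mathcal{N}*v)=\|\nabla\mathcal{N}*v\|_2^2$, and one needs only that the attractive quadratic form $\int v(|x|^q*v)$ be non-negative on the same slice, which is verifiable through the sign of the Fourier transform of $|x|^q$ (a standard Riesz-kernel computation). Strict convexity then forces a unique minimizer, and the scaling invariance above extends uniqueness from the unit-mass profile to every mass $A$. In exponent ranges where Fourier positivity is borderline, I would instead argue uniqueness directly from \eqref{profile_eq}: two distinct stationary solutions with the same mass would have to be strictly ordered in the mass-function sense on some interval, and invoking the mass comparison principle of Proposition \ref{comp_concentration} on their evolution leads to a contradiction with their shared stationarity.
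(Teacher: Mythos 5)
First, note that the paper does not prove this proposition: it is quoted verbatim from \cite{fh} (Theorem 3.1), where the result is obtained by reducing the stationarity condition to the radial integral equation \eqref{eq:u_pde} on a ball and solving it explicitly (the radius $R_A$ comes out of an explicit normalization, and continuity and the monotonicity dichotomy in $q$ are read off from the explicit profile). Your derivation of that same integral identity, the continuity bootstrap through the locally integrable kernel $|x|^{q-2}$, and the remark that its linearity makes the radius amplitude-independent are all sound as far as they go, though the last point only shows that rescaling a \emph{given} steady state preserves its radius; that \emph{every} steady state of mass $A$ has the same radius is again the uniqueness assertion, which you cannot invoke at that stage.

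The genuine gap is the uniqueness argument. The convexity route fails throughout the relevant range, not just in borderline cases: for $0<q<2$ the function $|x|^q$ is conditionally negative definite, so $\iint v(x)v(y)|x-y|^q\,dx\,dy\le 0$ whenever $\int v=0$; for $q=2$ this double integral equals $-2\bigl|\int x\,v(x)\,dx\bigr|^2\le 0$; and for $2-d<q<0$ the kernel $|x|^q$ has positive Fourier transform but enters the energy with the negative coefficient $1/q$. Hence the attractive quadratic form is nonpositive on mass-zero perturbations — the opposite of what your argument requires — and in Fourier variables it behaves like $|\xi|^{-d-q}$ at low frequencies, which dominates the Newtonian $|\xi|^{-2}$ precisely because $q>2-d$; the interaction energy is therefore not convex on the mass-$A$ slice. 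The fallback is also not a proof: Proposition \ref{comp_concentration} requires the initial mass functions to be globally ordered, which two distinct equal-mass steady states will in general not be, and even when two stationary solutions are ordered, mass comparison merely propagates the order and produces no contradiction (the family \eqref{family_stat_solution} for critical PKS is exactly such an ordered family of distinct stationary states of equal mass). Moreover, uniqueness of energy minimizers, even if established, would not cover the class in the proposition, since steady states only satisfy $\nabla K*u_s=0$ on their support and need not be minimizers. Two further soft spots: the direct-method existence is nontrivial here (for $2-d<q<0$ the attraction vanishes at infinity and minimizing sequences may split), and a Riesz rearrangement argument can only favor radially decreasing competitors, so it cannot yield the radially increasing profile asserted for $q\ge 2$.
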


It is proved by \cite{fh} that in its positive set, $u_s$ satisfies $\nabla K* u_s = 0$, which can also be written as $-u_s + \Delta (\frac{1}{q}|x|^q)*u_s = 0$, i.e.
\begin{equation}\label{eq:u_pde}
-u_s + (q+d-2)|x|^{q-2} * u_s = 0 \quad\text{in }\{u_s>0\}.
\end{equation}

In the proposition below, we construct a family of explicit subsolutions, all of which are compactly supported and converge to the stationary solution $u_s$ exponentially fast.

\begin{lemma}[\textbf{A family of explicit subsolutions}] Suppose $d\geq 3$ and $2-d < q \leq 2$. Let $u_s$ be the stationary solution to \eqref{rep_att} with mass $A$, as given by Proposition \ref{prop:stat_sol}.  We define the self-similar function $\bar u$ as 
\begin{equation}
\bar u (x,t) := \dfrac{1}{R^{d}(t)} u_s(\dfrac{x }{R(t)}),\label{def:u_bar}
\end{equation} 
where  $R(t)$ solves the ODE
\begin{equation}\left\{
\begin{split}
\dot{R}(t) &= C_1(1-R^{d+q-2}) R^{-d+1}\\
R(0) &= R_0,
\end{split}\right. \label{def:R}
\end{equation}
where $R_0 > 1$, and $C_1$ is some fixed constant only depending on $q, d$ and $A$.  Then for all $t\in [0,\infty)$, $\bar u(x,t)$ is a subsolution to \eqref{rep_att} in the mass comparison sense.
\label{lemma:subsol}
\end{lemma}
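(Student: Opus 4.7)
The plan is a direct verification that $\bar u$ satisfies the subsolution inequality from Definition 2.3 specialized to \eqref{rep_att}, using the self-similar structure of $\bar u$ together with the stationary profile of $u_s$.

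First, I would express all relevant quantities in terms of the mass functions of $u_s$. Writing $M_s(\rho) := M(\rho; u_s)$ and $\tilde M_s(\rho) := \tilde M(\rho; u_s)$, a direct change of variables in the integral defining $\bar M$ yields $\bar M(r,t) = M_s(r/R(t))$. For the auxiliary function $\overline{\tilde M}$, I would exploit the $(q-2)$-homogeneity of $|x|^{q-2}$: the substitution $y = Rz$ in the convolution gives $(\bar u \ast |x|^{q-2})(x) = R^{q-2}(u_s \ast |x|^{q-2})(x/R)$, and a further change of variables in the $r$-integral yields $\overline{\tilde M}(r,t) = R(t)^{d+q-2}\,\tilde M_s(r/R(t))$. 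Differentiating, $\partial_t \bar M = -(r\dot R/R^2)\,M_s'(r/R)$ and $\partial_r \bar M = M_s'(r/R)/R$.

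Second, I would invoke the stationary identity. Integrating \eqref{eq:u_pde} over $B(0,r)$ for $r \in (0, R_A)$ produces $\tilde M_s(r) = M_s(r)$ throughout the positive set of $u_s$; so in the positive set of $\bar u$, namely $r \in (0, R_A R(t))$, one has $\overline{\tilde M}(r,t) = R(t)^{d+q-2}\bar M(r,t)$. Substituting into the subsolution inequality \eqref{ineq_m1} specialized to \eqref{rep_att} (with $c_1 = 0$, $V = 0$, and the Newtonian-coefficient sign consistent with \eqref{eq:u_pde}), cancelling the common positive factor $M_s'(r/R)$, and setting $r' = r/R$, the condition reduces to the scalar inequality
\[
\frac{-\dot R(t)\, R(t)^{d-1}}{R(t)^{d+q-2}-1} \;\le\; \frac{M_s(r')}{\sigma_d (r')^{d}} \qquad \text{for all } r'\in(0, R_A).
\]

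Third, I would compute the infimum of the right-hand side and match the ODE. Proposition \ref{prop:stat_sol} (with $q \le 2$) tells us that $u_s$ is radially decreasing on $B(0, R_A)$, so the average density $d M_s(r')/(\sigma_d (r')^d)$ is non-increasing in $r'$, and hence $\inf_{r' \in (0, R_A)} M_s(r')/(r')^d = A/R_A^d$, attained in the limit $r' \to R_A$. Plugging $\dot R = C_1(1 - R^{d+q-2})R^{-d+1}$ into the left side, the $R$-dependence cancels completely and the condition collapses to $C_1 \le A/(\sigma_d R_A^d)$, so the choice $C_1 := A/(\sigma_d R_A^d)$ (which depends only on $d, q, A$ since $R_A$ does) makes the inequality hold, with equality at $r' = R_A$ and strict inequality for $r' < R_A$. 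Finally, since $R(0) = R_0 > 1$ and $\dot R < 0$ on $\{R > 1\}$ while $\dot R = 0$ at $R = 1$, the ODE has a global smooth solution with $R(t) > 1$ decaying to $1^+$ as $t \to \infty$, so $\bar u$ is a valid subsolution on $[0,\infty)$.

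The main obstacle is the algebraic reduction in the second step and the recognition that the ODE has been engineered precisely so that the prefactor $-\dot R R^{d-1}/(R^{d+q-2}-1)$ is independent of $R$, permitting a single constant $C_1$ to work uniformly in $t$. The hypothesis $q \le 2$ enters through the radial monotonicity of $u_s$, which is essential for identifying the infimum explicitly as $A/R_A^d$.
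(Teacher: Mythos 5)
Your proof is correct and follows essentially the same route as the paper: use the transport structure of $\bar u$ to compute $M_t$, use the stationary identity $\tilde M_s = M_s$ together with the $(q-2)$-homogeneity of $\Delta\mathcal{K}$ to collapse the subsolution condition to a pointwise scalar inequality, and close it with the lower bound on the average density $dM_s(r')/(\sigma_d(r')^d)\geq dA/(\sigma_d R_A^d)$ coming from the radial monotonicity of $u_s$ for $q\le 2$. The only cosmetic difference is that you isolate the $R$-independent prefactor $-\dot R R^{d-1}/(R^{d+q-2}-1)$ first and then choose $C_1$, whereas the paper lower-bounds the right-hand side of \eqref{ineq:wts} and then matches the ODE; both land on $C_1 = A/(\sigma_d R_A^d)$.
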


\begin{proof}
The proof is similar to the proof of Lemma \ref{lemma:family_solution}. Since $\bar u$ is a self-similar function, it can be easily verified that $\bar u $ solves the following transport equation
$$\bar u_t + \nabla \cdot (\bar u \frac{\dot R(t)}{R(t)} x)=0,$$
which implies that the mass function of $\bar u$ satisfies
\begin{equation}
M_t(r,t;\bar u) = \sigma_d r^{d-1} M_r(r,t;\bar u) ~(-\frac{\dot R(t)}{R(t)}) r. \label{m_eq_1}
\end{equation}
Our goal is to show that $\bar u$ is a subsolution to \eqref{rep_att} in mass comparison sense, which is equivalent to the following inequality due to Lemma \ref{pde_for_m}:
\begin{equation}
M_t(r,t;\bar u) \leq  \sigma_d r^{d-1} M_r(r,t;\bar u) \left[  \frac{M(r,t;\bar u)}{\sigma_d r^{d-1} } + \frac{\tilde M(r,t;\bar u)}{\sigma_d r^{d-1} }\right ].\label{m_eq_2}
\end{equation}
where $\tilde M(r,t; \bar u) = \int_{B(0,r)} (q+d-2)(|x|^{q-2} * \bar u)(x,t)dx$.  

We point out that $M_r$ is nonnegative by definition, since $M_r(r,t;\bar u) = \sigma_d r^d \bar u$, where $\bar u$ is nonnegative. By comparing \eqref{m_eq_1} and \eqref{m_eq_2}, it suffices to prove that the following inequality holds for $\bar u(r,t)$ for all $0\leq r\leq R(t)$:
\begin{equation}
-\frac{\dot R(t)}{R(t)} r \leq -\frac{M(r,t;\bar u)}{\sigma_d r^{d-1}} + \frac{\tilde M(r,t; \bar u)}{\sigma_d r^{d-1}}, \label{ineq:wts}
\end{equation}
Next we will investigate the terms on the right hand side of \eqref{ineq:wts}. Since $\bar u$ is defined as a continuous scaling of $u_s$, for the first term on the right hand side of \eqref{ineq:wts}, we have
$$M(r,t; \bar u) = M(\frac{r}{R(t)}; u_s).$$
For the second term, we obtain that for any $0\leq r\leq R(t)$, 
\begin{eqnarray*}
\tilde M(r,t; \bar u) &=& (q+d-2) \int_{B(0,r)} \int_{\mathbb{R}^d} |x-y|^{q-2} \frac{1}{R(t)^d} u_s(\frac{y}{R(t)}) dy dx\\
&=&(q+d-2) R(t)^{q-2} \int_{B(0,r)} \int_{\mathbb{R}^d} |\frac{x}{R(t)}-z|^{q-2} u_s(z) dz dx ~~ (z:=y/R(t))\\
&=& (q+d-2) R(t)^{q-2} \int_{B(0,r)} (|x|^{q-2} * u_s)(\frac{x}{R(t)}) dx\\
&=& R(t)^{q-2} \int_{B(0,r)} u_s(\frac{x}{R(t)}) dx \text{ ~(by \eqref{eq:u_pde})}\\
&=& R(t)^{d+q-2}M(\frac{r}{R(t)}; u_s).
\end{eqnarray*}
Putting the above two equations together yields
\begin{eqnarray}
\nonumber \text{RHS of \eqref{ineq:wts}} &=& (-1+R(t)^{d+q-2}) \frac{M(\frac{r}{R(t)}; u_s)}{\sigma_d r^{d-1}}\\
&\geq & C_1(-1+R(t)^{d+q-2})R(t)^{-d}r. \label{ineq:m_tilde}
\end{eqnarray}
Where $C_1$ only depends on $q, d$ and $A$. Here in the last inequality we used the fact that $u_s$ is radially decreasing, which implies that $M(r; u_s) \geq C_A |B(0,r)|$ for all  $0\leq r \leq R_A$, where $C_A$ is the average density of $u_s$ in its support $B(0,R_A)$.

Since $\dot R = C_1(1-R^{d+q-2}) R^{-d+1}$ by definition,  the above inequality implies that \eqref{ineq:wts} is true, which completes the proof.
\end{proof}
\begin{remark}
Similarly, we can construct a family of explicit supersolutions $\bar u$ in the mass comparison sense: here $\bar u$ is defined in \eqref{def:u_bar}, where $R(t)$ solves the ODE \eqref{def:R} with initial data $0<R_0<1$, and the constant $C_1$ in \eqref{def:R} is replaced by some other fixed constant $C_2$, which also only depends on $q, d$ and $A$.  
\label{rmk:supersol}
\end{remark}

Making use of the subsolutions and supersolutions we constructed in Lemma \ref{lemma:subsol} and Remark \ref{rmk:supersol} respectively, we next prove that all radial solutions with compactly supported initial data will converge to the unique stationary solution exponentially fast. 

\begin{theorem}\label{thm:aggregation}
Suppose $d\geq 3$ and $2-d < q \leq 2$.  Let $u$ be a weak solution to \eqref{rep_att} with initial data $u_0$ and mass $A$, where $u_0 \in L^1(\mathbb{R}^d) \cap L^\infty(\mathbb{R}^d) $ is non-negative, radially symmetric and compactly supported.  In addition, we assume that $u_0$ is strictly positive in a neighborhood of $0$. Let $u_s$ be the unique stationary solution with mass $A$, as given by Proposition \ref{prop:stat_sol}. Then as $t\to \infty$, $u(\cdot, t)$ converges to $u_s$ exponentially fast in Wasserstein distance.
\end{theorem}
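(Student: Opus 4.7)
The plan is to sandwich $u(\cdot,t)$ between the explicit self-similar subsolution from Lemma \ref{lemma:subsol} and the supersolution from Remark \ref{rmk:supersol}, and to exploit the fact that for both ODEs of the form \eqref{def:R} the equilibrium $R=1$ is a hyperbolic sink, so the scaling parameters relax to $1$ exponentially fast.

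First I would choose initial scalings $R_{01}>1$ sufficiently large and $R_{02}\in(0,1)$ sufficiently small so that
$$
\frac{1}{R_{01}^{d}}\,u_s\bigl(\tfrac{\cdot}{R_{01}}\bigr) \;\prec\; u_0 \;\prec\; \frac{1}{R_{02}^{d}}\,u_s\bigl(\tfrac{\cdot}{R_{02}}\bigr).
$$
The right ordering uses $\|u_0\|_\infty<\infty$: making $R_{02}$ small makes the rescaled profile so concentrated near $0$ that its mass function dominates the crude bound $M(r;u_0)\le \|u_0\|_\infty|B(0,r)|$ for every $r$. The left ordering uses the quantitative positivity of $u_0$ near the origin: if $u_0\ge c$ on $B(0,\delta)$ then $M(r;u_0)\ge c|B(0,r)|$ for $r\le \delta$, and this lower bound dominates the mass function of a sufficiently spread-out rescaling of $u_s$, while for large $r$ both mass functions equal $A$.

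Let $\bar u_1,\bar u_2$ be the sub- and supersolutions with $R_i(0)=R_{0i}$. By the mass comparison just proved for weak solutions to \eqref{rep_att},
$$
\bar u_1(\cdot,t)\;\prec\; u(\cdot,t)\;\prec\; \bar u_2(\cdot,t)\qquad\text{for all }t\ge 0.
$$
Linearizing the ODE $\dot R = C(1-R^{d+q-2})R^{-d+1}$ around $R=1$ gives $\dot{\varepsilon}\approx -C(d+q-2)\varepsilon$ with $d+q-2>0$, so a standard ODE comparison produces $|R_i(t)-1|\le M_0\,e^{-\alpha t}$ for some explicit $\alpha=\alpha(d,q,A)>0$.

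Since $\bar u_i$ is a pure rescaling of $u_s$ we have $M(r,t;\bar u_i)=M(r/R_i(t);u_s)$, and the boundedness and compact support of $u_s$ make $M(\cdot\,;u_s)$ Lipschitz, giving $\sup_r|M(r,t;\bar u_i)-M(r;u_s)|\le C\,e^{-\alpha t}$. The sandwich transfers this estimate to $u(\cdot,t)$, and for radial probability measures of equal mass the $p$-Wasserstein distance is controlled by the sup-norm difference of the mass functions via the inverse-CDF representation (exactly the mechanism invoked in the proof of Corollary \ref{wasserstein}), so $W_p(u(\cdot,t),u_s)\le C e^{-\alpha t}$. The only delicate step is the initial mass ordering: securing $\bar u_1(\cdot,0)\prec u_0$ genuinely requires the strict positivity of $u_0$ near the origin, without which the most spread-out candidate subsolution need not fit underneath $u_0$'s mass function; the remaining steps are a routine ODE linearization and a standard radial reduction of the Wasserstein distance.
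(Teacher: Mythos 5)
Your proposal is correct and follows essentially the same route as the paper: sandwich $u(\cdot,t)$ between the explicit scaling sub/supersolutions with $R_{01}>1$, $R_{02}\in(0,1)$, use mass comparison to propagate the ordering, linearize the scaling ODE around the hyperbolic sink $R=1$ to get exponential relaxation of $R_i(t)$ at rate $C_i(d+q-2)$, and then pass to the $p$-Wasserstein distance via the radial inverse-CDF reduction invoked in Corollary \ref{wasserstein}. Your added explanation of why strict positivity of $u_0$ near the origin is needed to secure the lower barrier, and why boundedness secures the upper barrier, fills in details the paper leaves implicit but is entirely consistent with it.
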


\begin{proof}
The proof is similar to the proof of Proposition \ref{prop:conv_rescale}.  Since $u_0$ is bounded, and is strictly positive in a neighborhood of 0, we can find $R_{01}$ sufficiently large and $R_{02}$ sufficiently small, such that
$$\dfrac{1}{R_{01}^d}u_s(\dfrac{\cdot}{R_{01}}, 0)  \prec u_0 \prec \dfrac{1}{R_{02}^d}u_s(\dfrac{\cdot}{R_{02}}, 0).$$
Let $\bar u_1$ be the subsolution given by Lemma \ref{lemma:subsol} with initial data $\frac{1}{R_{01}^d}u_s(\frac{\cdot}{R_{01}}, 0)$, and $\bar u_2$ be the supersolution given by Remark \ref{rmk:supersol} with initial data $\frac{1}{R_{02}^d}u_s(\frac{\cdot}{R_{02}}, 0)$.  Then mass comparison  in Proposition \ref{comp_concentration}  yields $\bar u_1(\cdot, t) \prec u(\cdot, t) \prec \bar u_2(\cdot, t)$ for all $t$. 

Now it suffices to show that $\bar u_1$ and $\bar u_2$ both converges to $u_s$ exponentially fast in Wasserstein distance. Note that $\bar u_1(\cdot, t)$ is a continuous scaling of $u_s$ with scaling coefficient $R(t)$, where $R(t)$ satisfies the ODE \eqref{def:R} and hence converges to 1 exponentially. More precisely, we have 
$$|R(t)-1| \leq C_1' e^{-C_1 (d+q-2) t},$$
where $C_1$ is as given in Lemma \ref{lemma:subsol},  and $C_1'$ depends on $q, d$ and $R_{01}$.  Similar result hold for the supersolution $\bar u_2$. Then argue as in Corollary \ref{wasserstein}, we have for all $p>1$ that 
$$W_p(\frac{u_i(\cdot,t)}{A}, \frac{u_s}{A}) \leq c_1 e^{-C_i(d+q-2) t} \quad \text{for }i = 1,2,$$
where $c_i$ depends on $R_{0i}$ respectively. Since $u_s$ is squeezed between $\bar u_1$ and $\bar u_2$ in the mass comparison sense, the inequality above yields
$$W_p(\frac{u(\cdot,t)}{A}, \frac{u_s}{A}) \leq C' e^{-C(d+q-2) t} \quad \text{for }i = 1,2,$$
where $C := \min(C_1, C_2)$ depends on $d, q$ and $A$, while $C'$ depends on $d, q, A$ and $u(\cdot, 0)$.
\end{proof}

\end{document}